\documentclass[12pt]{article}
\usepackage{fullpage}
\usepackage{url}
\usepackage{amsmath}
\usepackage{comment}
%

%
\overfullrule=0pt
%
%
%
%
\newcommand{\se}{6e+6}
\newcommand{\qe}{30e+6}
\newcommand{\werthree}{2^{2^{k^{\qe}}}}
\newcommand{\werthreesix}{2^{2^{k^{186}}}}
\newcommand{\werthreeth}{2^{2^{k^{396}}}}
\newcommand{\rakt}{R_a(k,k)}
\newcommand{\ramkmt}{R_{a-1}(k-1,k-1)}
\newcommand{\restritwo}{\Omega((\log\log n )^{1/186})}
\newcommand{\restrithree}{\Omega((\log\log n )^{1/396})}

\newcommand{\M}{m_0}
\newcommand{\Mch}{\binom{m_0}{2}}
\newcommand{\thre}{k_1-3}
\newcommand{\threp}{k_1-2}
\newcommand{\threpp}{k_1-1}
\newcommand{\threppp}{k_1}
\newcommand{\threm}{k_1-4}
\newcommand{\itt}{k_1-3}

%
\newcommand{\resdone}{\Omega(n^{1/6}(\log n)^{1/3})}
\newcommand{\resd}{\Omega((n^{1/(6d)}(\log n)^{1/3})/d^{1/3})}
\newcommand{\resk}{\frac{(Ck_2)^{6k_1-18}}{(\log k_2)^{2k_1-6}}}

\newcommand{\Rodlns}{R{\"o}dl}
\newcommand{\Erdos}{Erd\H{o}s }
\newcommand{\Erdosns}{Erd\H{o}s}
\newcommand{\degg}{{\rm deg}}


%
%

\newcommand{\ang}[1]{\langle#1\rangle}

\newcommand{\nat}{{\sf N}}

\newcommand{\real}{{\sf R}}

\newcommand{\Rpos}{{\sf R}^+}

%
%
\newcommand{\xvec}[1]{\ifcase 3{#1} {\ang {x_1,x_2,x_3} } \else 
\ifcase 4{#1} {\ang{x_1,x_2,x_3,x_4}} \else {\ang {x_1,\ldots,x_{#1}}}\fi\fi}
\newcommand{\yvec}[1]{\ifcase 3{#1} {\ang {y_1,y_2,y_3} } \else 
\ifcase 4{#1} {\ang{y_1,y_2,y_3,y_4}} \else {\ang {y_1,\ldots,y_{#1}}}\fi\fi}
\newcommand{\zvec}[1]{\ifcase 3{#1} {\ang {z_1,z_2,z_3} } \else 
\ifcase 4{#1} {\ang{z_1,z_2,z_3,z_4}} \else {\ang {z_1,\ldots,z_{#1}}}\fi\fi}
\newcommand{\vecc}[2]{\ifcase 3{#2} {\ang { {#1}_1,{#1}_2,{#1}_3 } } \else
\ifcase 4{#1} {\ang { {#1}_1,{#1}_2,{#1}_3,{#1}_{4} } }
\else {\ang { {#1}_1,\ldots,{#1}_{#2}}}\fi\fi}
\newcommand{\veccd}[3]{\ifcase 3{#2} {\ang { {#1}_{{#3}1},{#1}_{{#3}2},{#1}_{{#3}3} } } \else
\ifcase 4{#1} {\ang { {#1}_{{#3}1},{#1}_{{#3}2},{#1}_{#3}3},{#1}_{{#3}4} }
\else {\ang { {#1}_{{#3}1},\ldots,{#1}_{{#3}{#2}}}}\fi\fi}
%

%
%
%
\newcommand{\veccz}[2]{\ifcase 3{#2} {\ang { {#1}_0,{#1}_2,{#1}_3 } } \else
\ifcase 4{#1} {\ang { {#1}_0,{#1}_2,{#1}_3,{#1}_{4} } }
\else {\ang { {#1}_0,\ldots,{#1}_{#2}}}\fi\fi}
%

%
%
\newcommand{\xve}[1]{\ifcase 3{#1} {x_1,x_2,x_3} \else 
\ifcase 4{#1} {x_1,x_2,x_3,x_4} \else {x_1,\ldots,x_{#1}}\fi\fi}
\newcommand{\yve}[1]{\ifcase 3{#1} {y_1,y_2,y_3} \else 
\ifcase 4{#1} {y_1,y_2,y_3,y_4} \else {y_1,\ldots,y_{#1}}\fi\fi}
\newcommand{\zve}[1]{\ifcase 3{#1} {z_1,z_2,z_3} \else 
\ifcase 4{#1} {z_1,z_2,z_3,z_4} \else {z_1,\ldots,z_{#1}}\fi\fi}
\newcommand{\ve}[2]{\ifcase 3#2 {{#1}_1,{#1}_2,{#1}_3} \else
\ifcase 4#2 {{#1}_1,{#1}_2,{#1}_3,{#1}_{4}}
\else {{#1}_1,\ldots,{#1}_{#2}}\fi\fi}
\newcommand{\ved}[3]{\ifcase 3#2 {{#1}_{{#3}1},{#1}_{{#3}2},{#1}_{{#3}3}} \else
\ifcase 4#2 {{#1}_{{#3}1},{#1}_{{#3}2},{#1}_{{#3}3},{#1}_{{#3}4}}
\else {{#1}_{{#3}1},\ldots,{#1}_{{#3}{#2}}}\fi\fi}
\newcommand{\fuve}[3]{
\ifcase 3#2
{{#3}({#1}_1),{#3}({#1}_2,{#3}({#1}_3)} \else
\ifcase 4#2
{{#3}({#1}_1),{#3}({#1}_2),{#3}({#1}_3),{#3}({#1}_4)}
\else
{{#3}({#1}_1),\ldots,{#3}({#1}_{#2})}\fi\fi}
%

%
%


%
%

\newcommand{\setmathchar}[1]{\ifmmode#1\else$#1$\fi}
\newcommand{\vlist}[2]{%
	\setmathchar{%
		\compound#2\one{#2}\two
		\ifcompound
			({#1}_1,\ldots,{#1}_{#2})
		\else
			\ifcat N#2
				({#1}_1,\ldots,{#1}_{#2})
			\else
				\ifcase#2
					({#1}_0)\or
					({#1}_1)\or
					({#1}_1,{#1}_2)\or 
					({#1}_1,{#1}_2,{#1}_3)\or
					({#1}_1,{#1}_2,{#1}_3,{#1}_4)\else 
					({#1}_1,\ldots,{#1}_{#2})
				\fi
			\fi
		\fi}}

\newif\ifcompound
\def\compound#1\one#2\two{%
	\def\one{#1}
	\def\two{#2}
	\if\one\two
		\compoundfalse
	\else
		\compoundtrue
	\fi}

%
%
\newcommand{\xwe}[1]{\ifcase 3{#1} {x_1\wedge x_2\wedge x_3} \else 
\ifcase 4{#1} {x_1\wedge x_2\wedge x_3\wedge x_4} \else {x_1\wedge \cdots \wedge
x_{#1}}\fi\fi}
\newcommand{\we}[2]{\ifcase 3#2 {\ang { {#1}_1\wedge {#1}_2\wedge {#1}_3 } } \else
\ifcase 4{#1} {\ang { {#1}_1\wedge {#1}_2\wedge {#1}_3\wedge {#1}_{4} } }
\else {\ang { {#1}_1\wedge \cdots\wedge {#1}_{#2}}}\fi\fi}

\newcommand{\st}{\mathrel{:}}

\newcommand{\into}{\rightarrow}

\newcommand{\ep}{\epsilon}
\newcommand{\es}{\emptyset}

\newcommand{\ceil}[1]{\left\lceil {#1}\right\rceil}

%
%

\newcommand{\monus}{\;\raise.5ex\hbox{{${\buildrel
    \ldotp\over{\hbox to 6pt{\hrulefill}}}$}}\;}

%
%
%
%
%
%
\newcounter{savenumi}

\newtheorem{theoremfoo}{Theorem}[section] 
\newenvironment{theorem}{\pagebreak[1]\begin{theoremfoo}}{\end{theoremfoo}}

\newtheorem{lemmafoo}[theoremfoo]{Lemma}
\newenvironment{lemma}{\pagebreak[1]\begin{lemmafoo}}{\end{lemmafoo}}
\newtheorem{conjecturefoo}[theoremfoo]{Conjecture}

\newtheorem{conventionfoo}[theoremfoo]{Convention}

\newtheorem{porismfoo}[theoremfoo]{Porism}

\newtheorem{gamefoo}[theoremfoo]{Game}

\newtheorem{corollaryfoo}[theoremfoo]{Corollary}

\newtheorem{openfoo}[theoremfoo]{Open Problem}

\newtheorem{exercisefoo}{Exercise}

\newcommand{\fig}[1] 
{
 \begin{figure}
 \begin{center}
 \input{#1}
 \end{center}
 \end{figure}
}

\newtheorem{potanafoo}[theoremfoo]{Potential Analogue}

\newtheorem{notefoo}[theoremfoo]{Note}
\newenvironment{note}{\pagebreak[1]\begin{notefoo}\rm}{\end{notefoo}}

\newtheorem{notabenefoo}[theoremfoo]{Nota Bene}

\newtheorem{nttn}[theoremfoo]{Notation}
\newenvironment{notation}{\pagebreak[1]\begin{nttn}\rm}{\end{nttn}}

\newtheorem{empttn}[theoremfoo]{Empirical Note}

\newtheorem{examfoo}[theoremfoo]{Example}

\newtheorem{dfntn}[theoremfoo]{Def}
\newenvironment{definition}{\pagebreak[1]\begin{dfntn}\rm}{\end{dfntn}}

\newtheorem{propositionfoo}[theoremfoo]{Proposition}

\newenvironment{proof}
    {\pagebreak[1]{\narrower\noindent {\bf Proof:\quad\nopagebreak}}}{\QED}

\newcommand{\yyskip}{\penalty-50\vskip 5pt plus 3pt minus 2pt}
\newcommand{\blackslug}{\hbox{\hskip 1pt
        \vrule width 4pt height 8pt depth 1.5pt\hskip 1pt}}
\newcommand{\QED}{{\penalty10000\parindent 0pt\penalty10000
        \hskip 8 pt\nolinebreak\blackslug\hfill\lower 8.5pt\null}
        \par\yyskip\pagebreak[1]}

\newcommand{\BBB}{{\penalty10000\parindent 0pt\penalty10000
        \hskip 8 pt\nolinebreak\hbox{\ }\hfill\lower 8.5pt\null}
        \par\yyskip\pagebreak[1]}

\newtheorem{factfoo}[theoremfoo]{Fact}




\newenvironment{block}{\begin{list}{\hbox{}}{\leftmargin 1em
    \itemindent -1em \topsep 0pt \itemsep 0pt \partopsep 0pt}}{\end{list}}


\dimen15=0.75em
\dimen16=0.75em



\begin{document}

\title{Applications of the Canonical Ramsey Theorem to Geometry}

\author{
{William Gasarch}
\thanks{University of Maryland at College Park,
Department of Computer Science,
        College Park, MD\ \ 20742.
\texttt{gasarch@cs.umd.edu}
}
\\ {\small Univ. of MD at College Park}
\and
{Sam Zbarsky}
\thanks{Montgomery Blair High School,
Silver Spring, MD, 20901
\texttt{sa\_zbarsky@yahoo.com}
}
\\ {\small Montgomery Blair High School}
}

\maketitle

\begin{abstract}
Let $\{p_1,\ldots,p_n\}\subseteq \real^d$. We think of $d \le n$.
How big is the largest subset $X$ of points such that all of the distances
determined by elements of $\binom{X}{2}$ are different?
We show that $X$ is at least $\resd$.
This is not the best known; however the technique is new.

Assume that no three of the original points are collinear.
How big is the largest subset $X$ of points such that all of the  areas
determined by elements of $\binom{X}{3}$ are different?
We show that, if $d=2$ then $X$ is at least $\restritwo$,
and if $d=3$ then $X$ is at least $\restrithree$.
We also obtain results for countable sets of points in $R^d$.

All of our results use variants of the canonical Ramsey theorem and
some geometric lemmas. 
\end{abstract}

\section{Introduction}

Let $\{p_1,\ldots,p_n\}\subseteq \real^d$. We think of $d \le n$.
How big is the largest subset $X$ of points such that all of the distances
determined by elements of $\binom{X}{2}$ are different?
Assume that no three of the original points are collinear.
How big is the largest subset $X$ of points such that all of the  areas
determined by elements of $\binom{X}{3}$ are different?

\begin{definition}
Let $a\ge 1$.
Let $h_{a,d}(n)$ be the largest integer so that if $p_1,\ldots,p_n$ are any set
of $n$ distinct points in $\real^d$, no $a$ points in the same $(a-2)$-dimensional space,
then there exists a subset $X$
of $h_{a,d}(n)$ points for which all of the volumes determined by elements of $\binom{X}{a}$
are different.
The {\it $h_{a,d}(n)$ problem} is the problem of establishing upper
and lower bounds on $h_{a,d}(n)$.
The definition extends to letting $n$ be an infinite cardinal $\alpha$ where  $\aleph_0 \le \alpha \le 2^{\aleph_0}$.
\end{definition}

Below we summarize all that is know about $h_{a,d}(n)$ (to our knowledge).

\begin{enumerate}
\item
\Erdosns~\cite{ED-erdos}, in 1946,  showed that the number of distinct differences in the
$\sqrt n \times \sqrt n$ grid is $\le O(\frac{n}{\sqrt{\log n}})$. 
Therefore $h_{2,2}(n) \le O\biggl (\sqrt{\frac{n}{\sqrt{\log n}}}\biggr )$.
For $a\ge 3$ We do not know of any nontrivial upper bounds on $h_{2,d}$.
(The set $n^{1/d} \times \cdots \times n^{1/d}$ has 
many points collinear and hence cannot be
used to obtain an upper bound.)

\item
\Erdosns~\cite{remarks}, in 1950, showed that,
for $\aleph_0\le \alpha \le 2^{\aleph_0}$, $h_{2,d}(\alpha)=\alpha$.

\item
\Erdos considered the $h_{2,d}(n)$ problem 1957~\cite{erdos57} 
and 1970~\cite{erdos70}. 
In the latter paper he notes that $h_{2,2}(7)=3$~\cite{erdoskelly} and
$h_{2,3}(9)=3$~\cite{croft}. \Erdos conjectured that $h_{2,1}(n)=(1+o(n))n^{1/2}$ and notes that
$h_{2,1}(n)\le (1+o(n))n^{1/2}$~\cite{erdosturan}.

\item
Komlos, Sulyok and Szemeredi~\cite{metricone}, in 1975, show that $h_{2,1}(n)\ge \Omega(\sqrt n )$
though they state it in different terms.

\item
\Erdosns~\cite{erdosmetric} considered the $h_{2,d}(n)$ problem in 1986.
He states {\it It is easy to see that $h_{2,d}(n) > n^{\epsilon_d}$ but the best possible value of $\epsilon_d$
is not known. $\epsilon_1=\frac{1}{2}$ follows from a result of Ajtai, Komlos, Sulyok and
Szemeredi~\cite{metricone}.} (We do not know why he added Ajtai who was not an author on that paper.)

\item
Avis, \Erdosns, and Pach~\cite{aep}, in 1991,  showed that for all sets of
$n$ points in the plane, for almost all $k$-subsets $X$ where $k=o(n^{1/7})$,
the elements of $\binom{X}{2}$ determine different distances.
Hence, for example, $h_{2,2}(n) = \Omega(n^{1/7 + \epsilon})$.

\item
Thiele~\cite{points}, 
in his PhD thesis from 1995, has as Theorem 4.33, that for all $d\ge 2$, $h_{2,d}=\Omega(n^{1/(3d-2)})$.

\item
Charalambides~\cite{distinctdist}, in 2012,  showed that $h_{2,2}(n) = \Omega(n^{1/3}/\log n)$.

\item 
We know of no references to $h_{a,d}$ for $a\ge 3$ in the literature.

\item
We believe that this is the first paper to define $h_{a,d}$ in its full generality.

\end{enumerate}

\begin{note}
The problem of $h_{2,2}$ is similar to but distinct from the {\it \Erdos Distance Problem}:
give a set of $n$ points in the plane how many distinct distances are guaranteed.
For more on this problem see~\cite{erdosdistproblem,erdosdistproblemweb}.
The problem of $h_{3,2}$ is similar to but distinct from the problem of determining,
given $n$ points in the plane no three collinear, how many distinct triangle-areas are obtained
(see~\cite{extri} and references therein).
We do not know of any reference to a higher dimensional analog of these 
problems.
\end{note}

Below we list our result. For two of our results stronger results are known and in the above list; however, our proofs are 
very different. We find our proofs simpler.

\begin{itemize}
\item
$h_{2,d}(n) \ge \resd$. (Torsten has a better result.)
\item
$h_{3,2}(n) \ge \restritwo$. 
\item
$h_{3,3}(n) \ge  \restrithree$.
\item
$h_{2,d}(\aleph_0)=\aleph_0$. (\Erdos had a more general result.)
\item
$h_{3,2}(\aleph_0)=\aleph_0$. 
\item
$h_{3,3}(\aleph_0)=\aleph_0$. 
\end{itemize}

Our proofs have two ingredients:
(1) upper bounds on variants of the canonical Ramsey numbers, and
(2) geometric lemmas about points in $\real^d$.

In Section~\ref{se:graphs},\ref{se:lemmas}, and \ref{se:gbounds}
we define terms, prove lemmas, and finally prove 
an upper  bound on a variant of the canonical Ramsey Theorem.
Our proof uses some ideas from the upper bound on the 
standard canonical Ramsey number, $ER(k)$, due to 
Lefmann and \Rodlns~\cite{BetterCanRamsey}.
In Section~\ref{se:geom} we prove a geometric lemma about points in $\real^d$.
In Section~\ref{se:main} 
we use our upper bound and our geometric lemma to prove lower bounds
on $h_{2,d}(n)$.
In Section~\ref{se:tri} we prove the needed variant of the canonical Ramsey theorem,
and 
the needed geometric lemmas, to obtain lower bounds on $h_{3,2}(n)$ and $h_{3,3}(n)$.
In Section~\ref{se:inf} we use known theorems and our geometric lemmas to obtain
results about countable sets of points.
In Section~\ref{se:spec} we speculate about lower bounds for
$h_{a,d}$ for $a\ge 3$.
In Section~\ref{se:open} we list open problems.

\section{Variants of the Canonical Ramsey Theorem}\label{se:graphs}

\begin{notation}
Let $n\in\nat$.
\begin{enumerate}
\item
$[n]$ is the set $\{1,\ldots,n\}$.
\item
If $X$ is a set and $0\le a\le |X|$ then $\binom{X}{a}$ is the set
of all $a$-sized subsets of $X$.
\item
We identify $\binom{X}{a}$ with the complete $a$-ary hypergraph on
the set $X$. Hence we will use terms like {\it vertex} and {\it edge}
when referring to $\binom{X}{a}$.
\item
We will often have $X\subseteq \nat$ and 
a coloring $COL:\binom{X}{a}\into Y$ ($Y$ is either $[c]$ or $\omega$).
We use the usual convention of using $COL(x_1,\ldots,x_a)$ for $COL(\{x_1,\ldots,x_a\})$.
We do not take this to mean that $x_1 < \cdots < x_a$.
\end{enumerate}
\end{notation}

We define terms and then state the canonical Ramsey theorem (for graphs).
It was first proven by \Erdos and Rado~\cite{Canramsey}.
The best known upper  bounds on the canonical Ramsey numbers are due to
Lefmann and \Rodlns~\cite{BetterCanRamsey}.

\begin{definition}
Let $COL:\binom{[n]}{2}\into\omega$.
Let $V\subseteq [n]$.
\begin{enumerate}
\item
The set $V$ is {\it homogenous} (henceforth {\it homog}) if 
for all $x_1<x_2$ and $y_1<y_2$
$$COL(x_1,x_2)=COL(y_1,y_2) \hbox{ iff } TRUE.$$
(Every edge in $\binom{V}{2}$ is colored the same.)
\item
The set $V$ is {\it min-homogenous} 
(henceforth {\it min-homog}) 
if for all $x_1<x_2$ and $y_1<y_2$
$$COL(x_1,x_2)=COL(y_1,y_2) \hbox{ iff } x_1=y_1.$$
\item
The set $V$ is {\it max-homogenous} 
(henceforth {\it max-homog}) 
if for all $x_1<x_2$ and $y_1<y_2$
$$COL(x_1,x_2)=COL(y_1,y_2) \hbox{ iff } x_2=y_2.$$
\item
The set $V$ is {\it rainbow} 
if for all $x_1<x_2$ and $y_1<y_2$
$$COL(x_1,x_2)=COL(y_1,y_2) \hbox{ iff } (x_1=y_1 \hbox{ and }x_2=y_2).$$
(Every edge in $\binom{V}{2}$ is colored differently.)
\end{enumerate}
\end{definition}

\begin{theorem}
For all $k$ there exists $n$ such that, 
for all colorings of $\binom{[n]}{2}$
there is either a homog set of size $k$,
a min-homog set of size $k$, 
a max-homog set of size $k$, 
or a rainbow set of size $k$.
We denote the least value of $n$ that works by $ER(k)$.
\end{theorem}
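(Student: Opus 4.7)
The plan is to reduce to the classical Ramsey theorem on $4$-uniform hypergraphs, following the original approach of \Erdos and Rado. Given any coloring $COL:\binom{[n]}{2}\into\omega$, I would define an auxiliary coloring $COL'$ on $\binom{[n]}{4}$ that records only the pattern of color-equalities among the six pairs inside each $4$-subset: for $S\in\binom{[n]}{4}$, let $COL'(S)$ be the equivalence relation on $\binom{S}{2}$ under which two pairs $e,f$ are equivalent iff $COL(e)=COL(f)$. Since $|\binom{S}{2}|=6$, the coloring $COL'$ uses at most $B_6=203$ colors, one for each partition of a $6$-element set. By the finite Ramsey theorem for $4$-uniform hypergraphs with $203$ colors, I can pick $n$ large enough that $COL'$ has a monochromatic subset $V$ of size $5k$; on $V$, every $4$-subset induces the same partition $\pi$ of its six pairs.

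The main step is to show that $\pi$ must be one of exactly four canonical partitions, each forcing the corresponding conclusion on $V$: the one-block partition yields homog, the partition whose blocks are pairs sharing a common minimum yields min-homog, the analogous common-maximum partition yields max-homog, and the discrete partition yields rainbow. To argue that no other $\pi$ is self-consistent, take any five points $a<b<c<d<e$ in $V$; each pair lies in three of the five $4$-subsets, so requiring $\pi$ to hold identically across all of them propagates color-equalities across the whole $5$-set. The argument is cleanest via the invariant $T_{a,b}=\{\{c,d\}\in\binom{V}{2}:COL(c,d)=COL(a,b)\}$: uniformity of $\pi$ forces membership in $T_{a,b}$ to depend only on the order-type of $\{a,b\}$ inside $\{a,b,c,d\}$, and chasing this across $5$-tuples collapses the $203$ possibilities to the four canonical ones (roughly: any ``mixed'' block of $\pi$ mentioning both a minimum-sharing pair and a maximum-sharing pair gives conflicting equalities when one slides the configuration within the $5$-set).

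The hard part is exactly this case analysis; since $B_6=203$ is too large to attack by enumeration, I would formulate it structurally via the invariant above rather than by checking cases. The bound on $ER(k)$ produced by this route is a four-times iterated exponential in $k$, governed by the Ramsey number for $4$-uniform hypergraphs on $203$ colors. The sharper bound of Lefmann and \Rodlns~\cite{BetterCanRamsey}, which the authors draw on later in the paper, improves the reduction itself (replacing the crude Ramsey step by a more refined partition argument) rather than the structural classification step.
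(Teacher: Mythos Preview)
The paper does not prove this theorem; it is stated as a known result attributed to \Erdos and Rado, with sharper quantitative bounds credited to Lefmann and \Rodlns. So there is no in-paper proof to compare against.

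Your proposal is precisely the classical \Erdos--Rado argument and is correct in outline. Two minor remarks. First, you do not need a monochromatic set of size $5k$: once the uniform pattern $\pi$ on $V$ is one of the four canonical partitions, the \emph{entire} set $V$ already has the corresponding property, since any two edges of $\binom{V}{2}$ lie together in some $4$-subset of $V$ and $\pi$ dictates whether their colors agree; so size $k$ (plus $O(1)$ if you want room to run the $5$-point consistency argument inside $V$) suffices. Second, the case analysis you flag as the hard part is real but much smaller than $B_6=203$ suggests: on a uniform set the relation ``$COL(e)=COL(f)$'' depends only on the order type of $e\cup f$, and sliding one vertex inside a $5$-tuple shows that any nontrivial equality between overlapping pairs propagates to full min-homogeneity or max-homogeneity, with a similar slide handling disjoint pairs; this collapses the possibilities to the four canonical $\pi$ without enumerating $203$ cases. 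Finally, $R_4$ with a fixed number of colors is a tower of height three in $k$, not four.
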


We now state the asymmetric canonical Ramsey Theorem.

\begin{theorem}\label{th:asyer}
For all $k_1,k_2$ there exists $n$ such that, 
for all colorings of $\binom{[n]}{2}$,
there is either a homog set of size $k_1$,
a min-homog set of size $k_1$, 
a max-homog set of size $k_1$, 
or a rainbow set of size $k_2$.
We denote the least value of $n$ that works by $ER(k_1,k_2)$.
\end{theorem}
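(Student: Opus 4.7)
The plan is to derive Theorem~\ref{th:asyer} directly from the (symmetric) canonical Ramsey theorem stated just above it, by a simple reduction. The key observation is that each of the four structural properties at play --- being homog, min-homog, max-homog, or rainbow --- is \emph{hereditary}: if $V \subseteq [n]$ has one of these properties under $COL$, then so does every subset $V' \subseteq V$. This is immediate from the definitions: each condition is an ``iff'' statement quantified universally over pairs $x_1<x_2$, $y_1<y_2$ in $V$, so restricting the domain of quantification to pairs from a smaller set $V'$ preserves truth.

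Given this, the proof is a one-step reduction. Set $k = \max(k_1,k_2)$ and take $n = ER(k)$, which exists by the symmetric canonical Ramsey theorem. For any coloring $COL:\binom{[n]}{2}\to\omega$, that theorem yields some $V \subseteq [n]$ of size $k$ which is homog, min-homog, max-homog, or rainbow. If $V$ falls into one of the first three categories, choose any $V' \subseteq V$ with $|V'| = k_1 \le k$; by heredity $V'$ inherits the property, giving the desired structure of size $k_1$. If $V$ is rainbow, choose any $V' \subseteq V$ with $|V'| = k_2 \le k$; again $V'$ is rainbow by heredity. In either case the required structure exists, so $ER(k_1,k_2)$ is well-defined and in fact $ER(k_1,k_2) \le ER(\max(k_1,k_2))$.

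There is no real obstacle here; the statement is a pure existence claim, and once one records that the four definitions are downward-closed under taking subsets the result falls out of Theorem~2.2 without any further combinatorial work. The only ``forward-looking'' caution is cosmetic: one should be careful in the write-up to confirm the hereditary property for each of the four cases separately (taking a moment to note, for instance, that for min-homog the equivalence ``$COL(x_1,x_2)=COL(y_1,y_2)$ iff $x_1=y_1$'' still holds on subsets because both directions of the iff are preserved under restriction). If one later wants a quantitative bound on $ER(k_1,k_2)$ that is sharper than $ER(\max(k_1,k_2))$ --- presumably needed in Section~\ref{se:main} to extract the lower bounds on $h_{2,d}$ --- a separate, more refined argument in the spirit of Lefmann--R{\"o}dl would be required, but this is independent of the existence claim stated here.
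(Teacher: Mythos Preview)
Your proof is correct. The paper does not actually give a proof of Theorem~\ref{th:asyer}: it merely states the asymmetric canonical Ramsey theorem and moves on, implicitly treating it as an immediate consequence of the symmetric version (Theorem~2.2) or as folklore. Your reduction via $n = ER(\max(k_1,k_2))$ and the observation that all four properties are hereditary under taking subsets is exactly the natural way to fill in this gap, and it is sound. Your caveat at the end is also on target: the paper's real work is on $WER(k_1,k_2)$, not $ER(k_1,k_2)$, and the quantitative bound in Theorem~\ref{th:gbounds} is obtained by a direct construction rather than by passing through the symmetric case.
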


We will actually need a variant of the asymmetric canonical Ramsey Theorem which is weaker
but gives better upper bounds. 

\begin{definition}\label{de:weak}
Let $COL:\binom{[n]}{2}\into\omega$.
Let $V\subseteq [n]$.
The set $V$ is {\it weakly homogenous} (henceforth {\it whomog}) if 
there is a way to linear order $V$ (not necessarily the numerical order),
$$V= \{ x_1, x_2, \ldots, x_L \},$$
such that, for all for all $1\le i\le L-3$, for all $i<j<k\le L$, 
$$COL(x_i,x_j)=COL(x_i,x_k).$$
Informally, the color of $(x_i,x_j)$, where $i<j$, depends only on $i$.
(We intentionally have $1\le i\le L-3$. We do not care if 
$COL(x_{L-2},x_{L-1})=COL(x_{L-2},x_L)$.)
\end{definition}

\begin{note}
When presenting a whomog set we will also present the needed
linear order.
\end{note}

The following theorem follows from~\ref{th:asyer}.

\begin{theorem}\label{th:introer}
For all $k_1,k_2$ there exists $n$ such that, 
for all colorings of $\binom{[n]}{2}$
there is either 
a whomog set of size $k_1$, 
or a rainbow set of size $k_2$.
We denote the least value of $n$ that works by $WER(k_1,k_2)$.
\end{theorem}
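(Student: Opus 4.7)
The plan is to derive Theorem~\ref{th:introer} directly from Theorem~\ref{th:asyer} by showing $WER(k_1,k_2) \le ER(k_1,k_2)$. It suffices to verify that each of the four outcomes of Theorem~\ref{th:asyer} already delivers one of the two outcomes we want: a homog, min-homog, or max-homog set of size $k_1$ must be exhibited as a whomog set of size $k_1$ under some suitable linear ordering of its elements, while a rainbow set of size $k_2$ already is the second desired outcome.

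The easy cases use the natural numerical order. Let $V = \{x_1 < x_2 < \cdots < x_{k_1}\}$ be the set produced by Theorem~\ref{th:asyer}. If $V$ is homog then every edge has the same color, so in particular $COL(x_i,x_j)=COL(x_i,x_k)$ for all $i<j<k$, and $V$ is whomog under the natural ordering. If $V$ is min-homog then $COL(x_i,x_j)$ depends only on $\min\{x_i,x_j\}=x_i$ whenever $i<j$, so again the whomog condition holds under the natural ordering.

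The interesting case is max-homog, and here the trick is to use the \emph{reverse} ordering; this is the point where the freedom in Definition~\ref{de:weak} to pick any linear order on $V$ (not the numerical one) is essential. Given max-homog $V = \{x_1 < \cdots < x_{k_1}\}$, relist $V$ as $y_1,\ldots,y_{k_1}$ with $y_i := x_{k_1-i+1}$, so that $y_1 > y_2 > \cdots > y_{k_1}$ numerically. For any $i<j<k$ in this new ordering we have $y_i > y_j$ and $y_i > y_k$ numerically, so the numerical maxima of $\{y_i,y_j\}$ and $\{y_i,y_k\}$ are both $y_i$; by max-homogeneity, $COL(y_i,y_j) = COL(y_i,y_k)$. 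Hence $V$ is whomog under the reversed listing.

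The whole argument is essentially a bookkeeping check across the four cases, so I do not anticipate a serious obstacle; the one conceptual point worth flagging is simply that the reversal trick for max-homog relies on the whomog definition being agnostic to which linear order on $V$ is used, whereas min-homog and max-homog in Section~\ref{se:graphs} are defined with respect to the numerical order specifically.
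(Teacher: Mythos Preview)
Your proof is correct and follows exactly the route the paper indicates: the paper simply asserts that Theorem~\ref{th:introer} ``follows from~\ref{th:asyer}'' without spelling out any details, and your case analysis (homog and min-homog under the natural order, max-homog under the reversed order, rainbow unchanged) is precisely the intended verification. There is nothing to add.
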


In Theorem~\ref{th:gbounds} we will show
$$WER(k_1,k_2) \le \resk.$$

\section{Lemma to Help Obtain Rainbow Sets}\label{se:lemmas}

The next definition and lemmas gives a way to get a rainbow set under some conditions.

\begin{definition}
Let $COL:\binom{[m]}{2}\into\omega$.
If $c$ is a color and $v\in [m]$
then $\degg_{c}(v)$ is the number of $c$-colored edges
with an endpoint in $v$.
\end{definition}

The following result is due to Alon, Lefmann, and \Rodlns~\cite{maximalbest}.

\begin{lemma}
Let $m\ge 3$.
\begin{enumerate}
\item
Let $COL:\binom{[m]}{2}\into\omega$ be such that,
for all $v\in [m]$ and all colors $c$,
$\degg_c(v)\le 1$. Then
there exists a rainbow set of size 
$\ge \Omega((m\log m)^{1/3})$.
\item
There exists a coloring of $\binom{[m]}{2}$
such that for all $v\in [m]$ and all colors $c$,
$\degg_c(v)\le 1$ and all rainbow sets are of size
$\le O((m\log m)^{1/3})$.
\end{enumerate}
\end{lemma}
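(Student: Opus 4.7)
The plan is to handle the two halves of the lemma with different tools. For part (1), I would encode rainbow sets as independent sets in an auxiliary $4$-uniform hypergraph and apply an independent-set bound of Ajtai--Koml\'os--Pintz--Szemer\'edi--Spencer type. For part (2), I would construct the extremal coloring probabilistically, via a uniformly random $1$-factorization of $K_m$ together with a union bound.

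\textbf{Part (1): the lower bound.} Because $\degg_c(v)\le 1$ for every vertex $v$ and color $c$, each color class of $COL$ is a matching and, in particular, any two same-colored edges are vertex-disjoint and cover a $4$-set. Define a $4$-uniform hypergraph $H$ on vertex set $[m]$ whose hyperedges are exactly those $4$-sets $\{a,b,c,d\}$ that admit a pairing into two edges receiving a common color; then $X\subseteq [m]$ is rainbow for $COL$ iff $X$ is independent in $H$. Writing $m_c$ for the size of the $c$-th color class, $|E(H)|\le \sum_c \binom{m_c}{2}$, and combining $m_c\le m/2$ with $\sum_c m_c=\binom{m}{2}$ yields $|E(H)|=O(m^3)$ and hence average degree $\Delta=O(m^2)$. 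Then invoke the classical independent-set theorem for $k$-uniform hypergraphs with small pair-degrees, which guarantees an independent set of size $\Omega((N/\Delta^{1/(k-1)})(\log\Delta)^{1/(k-1)})$; substituting $k=4$, $N=m$, $\Delta=O(m^2)$ gives a rainbow set of size $\Omega((m\log m)^{1/3})$, as desired. The main obstacle will be verifying the co-degree hypothesis of the AKS-type theorem, namely that for every pair $u,v\in [m]$ the number of hyperedges of $H$ containing both is $O(m)$; this should follow from the matching structure, since such a hyperedge either uses a color already assigned to $\{u,v\}$ (unique by $\degg_c(u)\le 1$, giving $O(m)$ completions) or requires two matching edges incident to $u$ and $v$ separately (again $O(m)$ choices by the same degree bound).

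\textbf{Part (2): the matching upper bound.} Here I would sample a uniformly random proper edge coloring of $K_m$: when $m$ is even, a uniformly random $1$-factorization (decomposition into perfect matchings), and when $m$ is odd, the same construction with a phantom vertex adjoined. For a fixed $k$-subset $X\subseteq [m]$, the probability that its $\binom{k}{2}$ edges all receive distinct colors (i.e.\ lie in distinct matchings) is at most $\prod_{i=0}^{\binom{k}{2}-1}(1-i/(m-1))\le \exp(-\Theta(k^4/m))$. A union bound over the $\binom{m}{k}\le (em/k)^k$ candidate $k$-subsets shows that the expected number of rainbow $k$-sets is strictly less than $1$ as soon as $k\ge C(m\log m)^{1/3}$ for a sufficiently large absolute constant $C$, so some proper edge coloring has no rainbow set of size $k$. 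The delicate point is making sure the exponents balance correctly so that the $\log m$ factor appears with exponent exactly $1/3$.
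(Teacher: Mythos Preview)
First, a framing remark: the paper itself does not prove this lemma. It is quoted verbatim from Alon, Lefmann, and R\"odl and immediately used without argument, so there is no ``paper's proof'' to compare your proposal against. That said, let me evaluate your two parts on their own merits.

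\textbf{Part (1).} Your plan is essentially the Alon--Lefmann--R\"odl argument. The identification ``$X$ rainbow $\Longleftrightarrow$ $X$ independent in the $4$-uniform $H$'' is correct (the hypothesis $\deg_c(v)\le 1$ is exactly what forces any two same-colored edges to be vertex-disjoint), the edge and degree counts $|E(H)|=O(m^3)$, $\Delta(H)=O(m^2)$, codegree $O(m)$ are right, and plugging into a Duke--Lefmann--R\"odl / AKPSS-type bound for uncrowded hypergraphs with $k=4$ gives the stated $\Omega((m\log m)^{1/3})$. Nothing to add here.

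\textbf{Part (2).} Here there is a genuine gap, and it is not the exponent-balancing you flag at the end --- it is the probability estimate itself. You assert that in a uniformly random $1$-factorization the event ``the $\binom{k}{2}$ edges of a fixed $k$-set receive distinct colors'' has probability at most $\prod_{i=0}^{\binom{k}{2}-1}(1-i/(m-1))$. That product is exactly what you would get if the edge colors were \emph{independent} uniform samples from $[m-1]$, but in a random $1$-factorization they are far from independent: two edges sharing a vertex are deterministically differently colored, and once you reveal the colors of several edges, the conditional law of a further edge is typically not uniform on $[m-1]$. So the sequential ``reveal one edge at a time and pay a factor $1-i/(m-1)$'' picture simply does not hold; any inequality of this shape for a random $1$-factorization would itself require a real argument (some negative-association or coupling statement), and you have not supplied one. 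Note also that the obvious fallback --- color edges independently with roughly $m$ colors so that the product bound \emph{is} valid, then repair the proper-coloring violations --- fails, because the expected number of monochromatic $2$-paths is of order $m^2$, far too many to delete without destroying the rainbow-free property. In short, the construction you propose is the natural candidate, but the step ``probability $\le\exp(-\Theta(k^4/m))$'' is the heart of the matter and is currently unjustified; the Alon--Lefmann--R\"odl paper handles the upper bound with more care than a one-line product formula.
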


The following easily follows:

\begin{lemma}\label{le:maximal}
Let $m\ge 3$.
Let $COL:\binom{[m]}{2}\into\omega$ be such that,
for all $v\in [m]$ and all colors $c$,
$\degg_c(v)\le 1$. 
If $m=\Omega ( \frac{k^3}{\log k} )$ 
then there exists a rainbow set of size $k$.
\end{lemma}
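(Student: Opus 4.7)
The plan is to deduce this lemma directly from the Alon--Lefmann--R\"odl result stated immediately above, by a one-line algebraic manipulation. The hypothesis on $COL$ is identical, so the previous lemma guarantees a rainbow set of size at least $c_0 (m \log m)^{1/3}$ for some absolute constant $c_0 > 0$. I need only verify that the assumption $m = \Omega(k^3/\log k)$ is strong enough to make this quantity at least $k$.

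Concretely, I would fix a constant $C > 0$ (to be chosen) and assume $m \ge C k^3/\log k$. Substituting into the Alon--Lefmann--R\"odl bound gives a rainbow set of size at least
$$c_0 (m \log m)^{1/3} \ge c_0 \left( \frac{C k^3 \log m}{\log k} \right)^{1/3} = c_0 C^{1/3} k \left( \frac{\log m}{\log k} \right)^{1/3}.$$
Since $m \ge C k^3 /\log k \ge k$ for $k$ sufficiently large, we have $\log m \ge \log k$, so the factor $(\log m / \log k)^{1/3}$ is at least $1$. Choosing $C$ large enough that $c_0 C^{1/3} \ge 1$ then yields a rainbow set of size at least $k$, as required.

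The only mild subtlety is handling small values of $k$, which can be absorbed into the hidden constant in the $\Omega(\cdot)$ notation (for $k$ below any fixed threshold, a rainbow set of size $k$ exists trivially since $m \ge 3$ and one can take any edge, or simply enlarge $C$). There is no real obstacle here; the lemma is genuinely just a restatement of the Alon--Lefmann--R\"odl bound in a form convenient for the application to $WER(k_1,k_2)$ in Section~\ref{se:gbounds}, where one wants to input a target rainbow size $k$ and read off the required vertex count $m$.
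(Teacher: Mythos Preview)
Your proposal is correct and matches the paper's approach exactly: the paper simply says ``The following easily follows'' from the Alon--Lefmann--R\"odl bound and gives no further argument, so your one-line algebraic verification is precisely the intended (and only reasonable) proof.
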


The following definitions and lemmas will be  used to achieve the premise of Lemma~\ref{le:maximal}

\begin{definition}
Let $COL:\binom{[m]}{2}\into\omega$.
Let $c$ be a color and let $x\in[m]$.
\begin{enumerate}
\item
$\degg_{c}(x)$ is the number of $c$-colored edges $(x,y)$.
\item
A {\it bad triple} is a triple $a,b,c$ such that ${a,b,c}$ does not form a rainbow $K_3$.
\end{enumerate}
\end{definition}

The next two lemmas show us how to, in some cases,
reduce the number of bad triples.

\begin{lemma}\label{le:bad}
Let $COL:\binom{[m]}{2}\into\omega$ be such that, for every 
color $c$ and vertex $v$, $\degg_{c}(v)\le d$.
Then the number of bad triples is less than $\frac{dm^2}{6}$.
\end{lemma}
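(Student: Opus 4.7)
The approach is to charge each bad triple to a \emph{monochromatic cherry}: an unordered pair of edges $\{(v,x),(v,y)\}$ sharing an apex $v$ and satisfying $COL(v,x)=COL(v,y)$. The key observation is that every bad triple contains at least one monochromatic cherry, because its two same-colored edges are edges of a triangle and therefore meet at a common vertex. This yields the basic inequality
\[
\#\{\text{bad triples}\}\ \le\ \#\{\text{monochromatic cherries}\}.
\]

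To count cherries I would work vertex by vertex. For fixed $v$ and color $c$ there are exactly $\binom{\degg_c(v)}{2}$ monochromatic cherries at $v$ of color $c$, so the total cherry count is $\sum_{v=1}^{m}\sum_{c}\binom{\degg_c(v)}{2}$. The hypothesis $\degg_c(v)\le d$ together with the identity $\sum_c\degg_c(v)=m-1$ is then used to bound the inner sum: since $\binom{k}{2}\le k(d-1)/2$ when $k\le d$, one gets $\sum_c\binom{\degg_c(v)}{2}\le (d-1)(m-1)/2$, and summing over $v$ controls the total cherry count.

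To reach the precise $dm^2/6$ constant asserted in the lemma, the cherry-to-triple correspondence should be exploited more carefully. A natural way is to recast the count as $\binom{m}{3}\cdot\Pr[\text{random triple is bad}]$, bound this probability by $(d-1)/(m-2)$ via an inclusion--exclusion across the three candidate cherry centers inside a triangle (the three pair-coincidence events collapse to the ``monochromatic triangle'' event on their intersections), and then factor out $\binom{m}{3}=m(m-1)(m-2)/6$ directly. The main obstacle is exactly this constant: the combinatorial kernel (every bad triple harbors a cherry; cherries at $v$ of color $c$ number $\binom{\degg_c(v)}{2}$; the degree hypothesis controls the sum) is short and clean, but arranging the accounting so that the factor $\tfrac{1}{6}$ drops out requires a little more care than the bare convexity bound $\binom{k}{2}\le k(d-1)/2$.
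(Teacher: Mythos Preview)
Your cherry-counting approach is exactly the paper's: bound bad triples by monochromatic cherries, write the cherry count as $\sum_v\sum_c\binom{\degg_c(v)}{2}$, and control the inner sum using the degree cap together with $\sum_c\degg_c(v)=m-1$. You are also right to be uneasy about the constant $\tfrac16$.

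The paper obtains that factor by asserting that the cherry sum ``counts each triple thrice,'' and then writes $3b\le\sum_v\sum_c\binom{\degg_c(v)}{2}$. That assertion is wrong: a bad triple with exactly two equal edge-colors has a \emph{unique} apex and contributes one cherry, not three (only monochromatic triangles contribute three). So the honest inequality is just $b\le\sum_v\sum_c\binom{\degg_c(v)}{2}\le m(m-1)(d-1)/2<dm^2/2$, which is precisely what your first paragraph establishes. Your proposed inclusion--exclusion sharpening to a per-triple probability of $(d-1)/(m-2)$ cannot succeed either, because the stated bound $b<dm^2/6$ is in fact false: take $m=5$, $d=2$, and $2$-color $K_5$ by two edge-disjoint Hamiltonian $5$-cycles. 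Every color-degree is exactly $2$, yet all $\binom{5}{3}=10$ triples are bad, while $dm^2/6=25/3<10$. (Note that $m(m-1)(d-1)/2=10$ here, so your bound is tight on this example.)

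In short, there is no gap in your argument --- the gap is in the lemma's constant. The bound $b<dm^2/2$ is the correct statement, and it suffices for everything the paper does afterward: immediately after invoking the lemma the authors discard the denominator anyway and use only $b\le\delta m_0^{3}$.
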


\begin{proof}
We assume that $d$ divides $m-1$. We leave the minor adjustment needed in case $d$ does
not divide $m-1$ to the reader.

Let $b$ be the number of bad triples. We upper bound $b$ by summing over all $v$ that are
the point of the triple with two same-colored edges coming out of it.
This actually counts each triple thrice. Hence we have

\[
\begin{array}{rl}
3b & \le\sum_{v\in [m]}\sum_{c\in\nat}\hbox{ Num of bad triples $\{v,u_1,u_2\}$ with $COL(v,u_1)=COL(v,u_2)=c$ } \cr
  & \le\sum_{v\in [m]}\sum_{c\in\nat}\binom{\degg_c(v)}{2}  \cr
\end{array}
\]

We bound the inner summation.  Since $v$ is of degree $m-1$ we can 
renumber the colors as $1,2,\ldots,m-1$.
Since $\sum_{c=1}^{m-1} \degg_c(v) = m-1$ and $(\forall c)[\degg_c(v)\le d]$ the sum
$\sum_{c=1}^{m-1}\binom{\degg_c(v)}{2}$
is maximized when $d=\degg_1(v)=\degg_2(v)=\cdots=\degg_{(m-1)/d}(v)$ and the rest of the $\degg_c(v)$'s are 0.
Hence 

$$
3b\le\sum_{v\in [m]}\sum_{c=1}^{m-1} \binom{\degg_c(v)}{2} 
\le \sum_{v\in[m]}\sum_{c=1}^{(m-1)/d}  \binom{d}{2} < \frac{dm^2}{2}.
$$

Hence $b\le \frac{dm^2}{6}$.
\end{proof}

\begin{lemma}\label{le:lessbad}
Let $COL:\binom{[m]}{2}\into\omega$ be such that there are $\le b$ bad triples.
Let $1\le m'\le m$.
There exists an $m'$-sized set of vertices with $\le b\bigl (\frac{m'}{m}\bigr )^3$ bad triples.
\end{lemma}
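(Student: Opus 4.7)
The plan is to use a straightforward averaging argument over random $m'$-sized subsets of $[m]$.

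First I would pick a uniformly random subset $S \subseteq [m]$ with $|S|=m'$, and for each bad triple $T=\{a,b,c\}$ define the indicator $X_T$ of the event $T \subseteq S$. Since $S$ is chosen uniformly among $m'$-subsets, we have
$$\Pr[T \subseteq S] = \frac{\binom{m-3}{m'-3}}{\binom{m}{m'}} = \frac{m'(m'-1)(m'-2)}{m(m-1)(m-2)}.$$

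Next I would note that the number of bad triples contained in $S$ is $Y=\sum_T X_T$, where the sum ranges over the at most $b$ bad triples. By linearity of expectation,
$$\mathbb{E}[Y] \;\le\; b \cdot \frac{m'(m'-1)(m'-2)}{m(m-1)(m-2)} \;\le\; b\Bigl(\frac{m'}{m}\Bigr)^3,$$
where the last inequality uses the elementary fact that the function $x \mapsto (x-1)(x-2)/x^2$ is nondecreasing for $x \ge 2$, so $(m'-1)(m'-2)/(m')^2 \le (m-1)(m-2)/m^2$. (For $m' \le 2$ the claim is trivial since there are no bad triples in $S$.)

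Finally, by the probabilistic method, there must exist at least one specific $m'$-sized set $S$ for which $Y \le \mathbb{E}[Y] \le b(m'/m)^3$; this set witnesses the lemma. There is no real obstacle here — the only mild care is the simple comparison between the exact hypergeometric probability and the clean bound $(m'/m)^3$ stated in the conclusion.
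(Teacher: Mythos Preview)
Your proof is correct and is essentially identical to the paper's: both pick a uniformly random $m'$-subset, compute the expected number of surviving bad triples via linearity, bound the hypergeometric probability $\frac{\binom{m-3}{m'-3}}{\binom{m}{m'}}=\frac{m'(m'-1)(m'-2)}{m(m-1)(m-2)}$ by $(m'/m)^3$, and conclude by the probabilistic method. If anything, you are slightly more careful than the paper in justifying the last inequality via the monotonicity of $x\mapsto (x-1)(x-2)/x^2$.
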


\begin{proof}
Pick a set $X$ of size $m'$ at random. 
Let $E$ be the expected number of bad triples.
Note that

$$E = \sum_{\{v_1,v_2,v_3\} \hbox{ bad } } \hbox{Prob that $\{v_1,v_2,v_3\} \subseteq X $ }. $$

Let $\{v_1,v_2,v_3\}$ be a bad triple. 
The probability that all three nodes are in $X$ is bounded by

$$
\frac{\binom{m-3}{m'-3}}{\binom{m}{m'}}\le
\frac{m'(m'-1)(m'-2)}{m(m-1)(m-2)}\le\biggl(\frac{m'}{m}\biggr )^3.
$$ 

Hence the expected number of bad triples is $\le b(\frac{m'}{m})^3$.
Therefore there must exist some $X$ that has
$\le b(\frac{m'}{m})^3$ bad triples.

\end{proof}

\begin{note}
The above theorem presents the user with an interesting tradeoff.
She wants a large set with few bad triples. If $m'$ is large then
you get a large set, but it will have many bad triples.
If $m'$ is small then you won't have many bad triples, but
$m'$ is small. We will need a Goldilocks-$m'$ that is just right.
\end{note}

\section{The Asymmetric Weak Canonical Ramsey Theorem}\label{se:gbounds}

\begin{theorem}\label{th:gbounds}
There exists $C$ such that, for all $k_1,k_2$,
$$WER(k_1,k_2) \le \resk.$$
\end{theorem}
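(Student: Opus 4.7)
The plan is to iteratively extract anchor vertices for the whomog set. At stage $i$, I will maintain a ``live'' set $V_i \subseteq [n]$ disjoint from the partial anchor sequence $x_1, \ldots, x_i$, with the property that for each $j \le i$, every edge from $x_j$ to a vertex of $V_i$ has a common color $c_j$. After $k_1 - 3$ such stages, any three vertices of $V_{k_1-3}$ placed last will complete a whomog set of size $k_1$ under the linear order $x_1 < x_2 < \cdots < x_{k_1-3} < y_1 < y_2 < y_3$.

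The key step at each stage is to locate a new anchor $x_{i+1} \in V_i$ with many neighbors of a single color inside $V_i$. Setting $\alpha = C_3 k_2^6/(\log k_2)^2$ for a universal constant $C_3$ and writing $m_i = |V_i|$ and $d_i = m_i/\alpha$, I plan to prove the following dichotomy: either some $v \in V_i$ satisfies $\degg_c(v) \ge d_i$ in some color $c$---in which case I take $x_{i+1}$ to be this $v$ and set $V_{i+1}$ to be its $c$-colored neighbors in $V_i$, giving $m_{i+1} \ge m_i/\alpha$---or else $V_i$ already contains a rainbow set of size $k_2$ and we are done.

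The dichotomy will come from a three-lemma combination. Assuming every color degree in $V_i$ is strictly less than $d_i$, Lemma~\ref{le:bad} bounds the number of bad triples by $d_i m_i^2/6$. Applying Lemma~\ref{le:lessbad} with $m' = \sqrt{3 m_i/d_i}$ produces a subset of size $m'$ containing at most $m'/2$ bad triples; deleting one vertex per bad triple leaves $\ge m'/2$ vertices on which every color degree is $\le 1$. The choice of $\alpha$ ensures $m'/2 = \Omega(k_2^3/\log k_2)$, meeting the hypothesis of Lemma~\ref{le:maximal}, which then produces the rainbow set of size $k_2$.

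Iterating the dichotomy $k_1 - 3$ times gives $m_{k_1-3} \ge n/\alpha^{k_1-3}$, and with $n = \resk$ the universal constant $C$ can be calibrated so that $n/\alpha^{k_1-3} \ge 3$, finishing the proof. The main obstacle will be balancing the two competing pressures on $m'$ in the dichotomy: it must be small enough that the subsample's $\le d_i m'^3/(6 m_i)$ expected bad triples do not swamp $m'$, yet large enough that $m'/2$ clears the threshold of Lemma~\ref{le:maximal}. The forced choice $m' \asymp \sqrt{m_i/d_i}$ determines the exponent $6$ on $k_2$ and the exponent $2$ in the $\log k_2$ denominator, and propagating these through $k_1 - 3$ iterations produces exactly the exponents $6k_1 - 18$ and $2k_1 - 6$ in the stated bound.
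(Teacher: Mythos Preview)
Your proposal is correct and follows essentially the same approach as the paper: iteratively extract anchors whose monochromatic neighborhood captures a $1/\alpha$ fraction of the live set, and when no such anchor exists, apply Lemmas~\ref{le:bad}, \ref{le:lessbad}, and~\ref{le:maximal} in sequence to produce the rainbow set. Your parameterization via $\alpha = \Theta(k_2^6/(\log k_2)^2)$ is equivalent to the paper's choice $\delta = \Theta((\log k_2)^2/k_2^6)$, and your choice $m' = \sqrt{3m_i/d_i}$ so that the subsample has at most $m'/2$ bad triples is the same idea as the paper's $m' = 1.5m''$ with $m'' = \Theta(k_2^3/\log k_2)$; the only point you leave implicit is that whenever the rainbow branch is triggered one has $m_i \ge n/\alpha^{k_1-4} \ge 3\alpha \ge m'$, so Lemma~\ref{le:lessbad} can indeed be applied.
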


\begin{proof}

Let $n,m,m',m'',\delta$ be parameters to be determined later. They will be functions
of $k_1,k_2$.
Let $COL:\binom{[n]}{2}\into\omega$.

\noindent
{\bf Intuition:} In the usual proofs of Ramsey's Theorem (for two colors) we take a vertex $v$ and
see which of
$\degg_{RED}(v)$ or $\deg_{BLUE}(v)$ is large. One of them
must be at least half of the size of the vertices still in play.
Here we change this up:
\begin{itemize}
\item
Instead of taking a particular vertex $v$ we ask if there is {\it any}
$v$ and {\it any} color $c$ such that $\degg_c(v)$ is large.
\item
What is large? Similar to the proof of Ramsey's theorem it will be a fraction of
what is left.
Unlike the proof of Ramsey's theorem this fraction, $\delta$, will depend on $k_2$.
\item
In the proof of Ramsey's theorem we were guaranteed that one of $\degg_{RED}(v)$
or $\degg_{BLUE}(v)$ is large. Here we have no such guarantee.
We may fail. In that case something else happens and leads to a rainbow set!
\end{itemize}

\noindent
CONSTRUCTION

\noindent
{\bf Phase 1:} 

\noindent
{\bf Stage 0:}
\begin{enumerate}
\item
$V_0=\es$.
\item
$N_0=[n]$. 
\item
$COL'$ is not defined on any points.
\end{enumerate}

\noindent
{\bf Stage i:}
Assume that $V_{i-1}=\{x_1,\ldots,x_{i-1}\}$, $c_1,\ldots,c_{i-1}$,  and $N_{i-1}$ are already defined.

If there exists $x\in N_{i-1}$ and $c$ a color such that $\degg_c(x)\ge \delta N_{i-1}$
then do the following:
\[
\begin{array}{rl}
V_i = & V_{i-1} \cup \{x\}\cr
N_i   = & \{ v \in N_{i-1} \st COL(x,v)=c \} \cr
x_i  = & x \cr
c_i=  & c \cr
\end{array}
\]
Note that $|N_i| \ge \delta |N_{i-1}|$,
so $|N_i| \ge \delta^i n$,
and $|V_i|=i$.
If $i=\thre$ then goto Phase $2$.

If no such $x,c$ exist then goto Phase $3$.
In this case we formally regard the jump to Phase 3 as
happening in stage $i-1$ since nothing has changed.

\noindent
{\bf End of Phase 1}

\bigskip

\noindent
{\bf Phase 2:}
Since we are in Phase 2 $i=\thre$.
Let
$$V=V_{k_1-3}=\{ x_1, x_2, \ldots, x_{\thre} \}.$$
(This is the order the elements came into $V,$ not the numeric order.)
By construction $V$ is a whomog set of size $\thre$.
Note that, for all elements $x\in N_{\thre}$, $COL(x_i,x)=c_i$.

We need $|N_{\thre}|\ge 3$ (you will see why soon).
Since $|N_{\thre}|\ge \delta^{\thre}n$ we satisfy 
$|N_{\thre}|\ge 3$
by imposing the 
constraint
$$n\ge \frac{3}{\delta^{\thre}}.$$

Let 
$x_{\threp}$, $x_{\threpp}$, and $x_{\threppp}$ be three points from $N_{\thre}$.
Let $H$ be (in this order)

$$H= \{ x_1, x_2, \ldots, x_{\thre}, x_{\threp}, x_{\threpp}, x_{\threppp} \}.$$
$H$ is clearly whomog.
(Recall that in a whomog set of size $\threppp$ 
we do not care if $COL(x_{\threp},x_{\threpp})=COL(x_{\threp},x_{\threppp})$.)

\noindent
{\bf End of Phase 2}

\bigskip

\noindent
{\bf Phase $3$:}
Since we are in Phase 2 $i\le \threm$.
Let $N=N_i$.
$$|N| \ge \delta^i n \ge \delta^{\threm} n.$$
We will need $|N|\ge m$ since  we will find a rainbow subset of
$N$ and need $N$ to be big in the first place
so that the rainbow subset is of size at least $k_2$.
Hence we impose the constraint

$$n\ge \frac{m}{\delta^{\threm}}.$$

Recall that we also imposed the constraint $n\ge \frac{3}{\delta^{\thre}}$.
To satisfy both of these constraints we impose the following two
constraints:
$$m=\frac{3}{\delta}$$
and
$$n=\frac{3}{\delta^{\thre}}.$$

Let $|N|=\M$. 
We have no control over $\M$. All we will know is that $m\le \M\le n$.
Later on $\M$ will cancel out of calculations and hence we can set
other parameters independent of it.

Let $COL$ be the coloring restricted to $\binom{N}{2}$.
We can assume the colors are a subset of $\{1,\ldots,\Mch\}$.
Since we are in Phase 3 we know that,
for all $v\in N$, for all colors $c$,
$\degg_c(v) \le \delta \M$.
Hence, by Lemma~\ref{le:bad}, there are at most 

$$\frac{\delta \M \times \M^2}{6}\le \delta \M^3$$

\noindent
bad triples (we ignore the denominator of 6 since it makes later
calculations easier and only affects the constant).

By Lemma~\ref{le:lessbad} there exists $X\subseteq N$ of size $m'$ that has
$$b<\delta \M^3 \times \biggl (\frac{m'}{\M}\biggr)^3 = \delta (m')^3$$
bad triples. Note that the bound on $b$ is independent of $\M$.

We set $m'$ such that the number of bad triples is so small that
we can just remove one point from each to obtain a set  $X$  of size $m'$  with
{\it no} bad triples.

Since the number of bad triples is $\le \delta(m')^3$ we need

$$m' - \delta(m')^3 \ge m''.$$

Hence we impose the constraint

$$\delta =\frac{m'-m''}{(m')^3}.$$

We will now set the parameters.
Since we will use Lemma~\ref{le:maximal} it would be difficult to
optimize the parameters.
Hence we pick parameters that are easy to work with.

We will use Lemma~\ref{le:maximal} on $X$ to obtain a rainbow set of size $k_2$.
Hence we take

$$m''= \frac{Ak_2^3}{\log k_2}$$

\noindent
where $A$ is chosen to (1) make $m''$ large 
enough to satisfy the premise of Lemma~\ref{le:maximal}, 
(2) make $m''$ an integer, and
(3) make $m',m,n$, which will be functions of $m''$, integers.

We take 

$$m'=1.5m''. \hbox{ This is the value that minimize $\delta$ though this does not matter.}$$
With this value of $m'$ we obtain

$$
\delta = \frac{m'-m''}{(m'')^3} = \frac{1}{B(m'')^2}
$$

\noindent
where $B$ is an appropriate constant. Our constraints force

\[
\begin{array}{rl}
m=&  \frac{3}{\delta}\cr
n=&  \frac{3}{\delta^{\itt}} = 3(Bm'')^{2(\itt)} = \resk\cr
\end{array}
\]

\noindent
where $C$ is an appropriate constant.
\end{proof}

\section{Lemmas from Geometry}\label{se:geom}

\begin{definition}\label{de:cool}
Let $d\in\nat$.
\begin{enumerate}
\item
If $p,q\in \real^d$ then let $|p-q|$ be the Euclidean distance
between $p$ and $q$.
\item
Let $p_1,\ldots,p_{n}$ be points in $\real^d$.
$(p_1,\ldots,p_n)$ is a {\it cool sequence} if,
for all $1\le i \le n-3$, for all $i< j \le n$, $|p_i-p_j|$
is determined solely by $p_i$.
(Formally: for all $1\le i\le n-3$ there exists $L_i$ such that,
for all $i+1\le j\le n, |p_i - p_j|=L_i$.)
We intentionally have $1\le i\le n-3$.
We do not care if $|p_{n-2}-p_{n-1}|=|p_{n-2}-p_n|$.
\item
The sphere with center $x\in \real^{d+1}$ and radius $r\in \Rpos$ is the set
$$ \{ y\in \real^{d+1} \st |x-y|=r \}.$$
If the sphere is completely contained in an $(n+1)$-dimensional plane then the sphere is
called an $n$-sphere. 
\end{enumerate}
\end{definition}

Note that if $(p_1,\ldots,p_n)$ is cool then
$(p_2,\ldots,p_n)$ is cool. We use this implicitly without mention.

The following lemma is well known. 

\begin{lemma}\label{le:sphere}
Let $S$ be a $d$-sphere. Let $x\in S$ and $r\in\Rpos$.
The set
$$\{ y\in S \st |x-y|=r \}$$
is either an $(d-1)$-sphere or is empty.
\end{lemma}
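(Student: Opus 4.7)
The plan is to reduce this two-sphere intersection to a sphere--hyperplane intersection via the standard subtraction-of-equations trick. First, observe that the set $S' = \{y \st |x - y| = r\}$ is itself a $d$-sphere in the ambient $(d+1)$-dimensional plane: it is the sphere centered at $x$ of radius $r$. Hence the set in question is exactly $S \cap S'$.

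Next, I would write both spheres as quadratic equations. If $S$ has center $c$ and radius $R$, then $S$ is defined by $|y-c|^2 = R^2$ and $S'$ by $|y-x|^2 = r^2$. After expanding, both equations share the same $|y|^2$ term, so subtracting one from the other eliminates the quadratic part and leaves a single linear equation in $y$. That equation defines an affine hyperplane $H$ of the ambient $(d+1)$-plane, and every point of $S \cap S'$ lies on $H$. Thus $S \cap S' = S \cap H$.

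It therefore suffices to show that the intersection of a $d$-sphere $S$ with an affine hyperplane $H$ of its ambient $(d+1)$-plane is either empty or a $(d-1)$-sphere. For this I would drop a perpendicular from $c$ to $H$, obtaining a foot $c' \in H$ at distance $h \ge 0$ from $c$. By the Pythagorean theorem, a point $y \in H$ lies on $S$ iff $|y - c'|^2 = R^2 - h^2$. If $h > R$ this equation has no solutions and the intersection is empty; if $h < R$ the solution set is the sphere of radius $\sqrt{R^2 - h^2}$ inside the $d$-dimensional plane $H$ centered at $c'$, which is a $(d-1)$-sphere in the sense of Definition~\ref{de:cool}.

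The argument is essentially routine linear algebra, and there is no real obstacle. The only small subtlety is the tangent case $h = R$, where $S \cap H$ collapses to the single point $c'$. Since this lemma will be applied iteratively to restrict cool sequences to lower and lower dimensional spheres, the tangent case is harmless: it can be absorbed either as the empty output (terminating the construction) or as a degenerate $(d-1)$-sphere of radius zero, and in either interpretation the downstream arguments are unaffected.
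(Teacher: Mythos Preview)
Your argument is correct and is the standard proof of this fact. The paper itself does not supply a proof of this lemma at all: it simply introduces the statement with ``The following lemma is well known'' and moves on. So there is no paper proof to compare against; you have filled in the omitted details in the expected way.

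Your observation about the tangent case $h=R$ is apt. With $x\in S$ one computes that the hyperplane $H$ is at distance $h=\lvert 2R^2-r^2\rvert/(2R)$ from the center $c$, so $h=R$ occurs exactly when $r=2R$, in which case $S\cap S'$ is the single antipodal point of $x$. Strictly speaking the paper's dichotomy ``$(d-1)$-sphere or empty'' (with radii required to lie in $\Rpos$ by Definition~\ref{de:cool}) misses this degenerate case, but as you note it is harmless for the downstream application in Lemma~\ref{le:full}, since one cannot place two or more distinct points on a singleton.

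One tiny point worth making explicit: after subtracting the two quadratic equations you get $S\cap S'\subseteq H$, hence $S\cap S'\subseteq S\cap H$; the reverse inclusion also holds because any $y$ satisfying the $S$-equation and the difference equation automatically satisfies the $S'$-equation. You clearly intended this, but stating both directions would make the equality $S\cap S'=S\cap H$ airtight.
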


\begin{lemma}\label{le:full}
For all $d\ge 0$
there does not exist a cool sequence $p_1,\ldots,p_{d+3}$ on a $d$-sphere.
\end{lemma}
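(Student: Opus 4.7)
The plan is to prove the stronger statement by induction on $d$: \emph{any cool sequence of (distinct) points on a $d$-sphere has length at most $d+2$.} The stated lemma is then the contrapositive for length $d+3$.

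For the base case $d=0$, recall that a $0$-sphere lies in a $1$-dimensional line and hence consists of exactly two points. So no sequence of three distinct points on a $0$-sphere can exist, and in particular none can be cool.

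For the inductive step, suppose we have a cool sequence $(p_1,\ldots,p_{d+3})$ on a $d$-sphere $S$ with $d\ge 1$. The index $i=1$ satisfies $1\le i\le (d+3)-3=d$, so by the definition of cool there is a common value $L_1$ with $|p_1-p_j|=L_1$ for all $2\le j\le d+3$. Thus the $d+2$ points $p_2,\ldots,p_{d+3}$ all lie in the set $\{y\in S:\ |p_1-y|=L_1\}$. By Lemma~\ref{le:sphere}, this set is either empty or a $(d-1)$-sphere; it is nonempty (it contains $p_2$), so it is a $(d-1)$-sphere $S'$. Meanwhile, as noted just after Definition~\ref{de:cool}, dropping the first point of a cool sequence leaves a cool sequence, so $(p_2,\ldots,p_{d+3})$ is a cool sequence of length $d+2=(d-1)+3$ lying on the $(d-1)$-sphere $S'$. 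This contradicts the inductive hypothesis applied to $d-1$, completing the induction.

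The only genuinely delicate point is invoking Lemma~\ref{le:sphere} cleanly in the inductive step, and checking that the base case $d=0$ really does pin down a two-point set (so that three distinct points are unavailable); both are immediate from the definitions. Everything else is bookkeeping: the cool condition is exactly strong enough to let us peel off $p_1$, record that $p_2,\ldots,p_{d+3}$ are all at the same distance from it, and descend one dimension at a time.
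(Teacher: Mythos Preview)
Your proof is correct and follows essentially the same approach as the paper: induction on $d$, with the $0$-sphere (two points) as base case, and in the inductive step peeling off $p_1$ via Lemma~\ref{le:sphere} to drop the remaining cool sequence onto a $(d-1)$-sphere. Your write-up is in fact a bit more careful than the paper's, which has an index slip (it writes $p_1,\ldots,p_{d+2}$ in the induction step where $p_1,\ldots,p_{d+3}$ is meant); you also explicitly verify that $1\le (d+3)-3$ and that the intersection set is nonempty before invoking Lemma~\ref{le:sphere}.
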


\begin{proof}
We prove this by induction on $d$.

\noindent
{\bf Base Case $d=0$:}
Assume, by way of contradiction, that $(p_1,p_2,p_3)$ 
form a cool sequence on a 0-sphere.
A 0-sphere is a set of two points, hence this is impossible.
(Note that being a cool sequence did not constraint $(p_1,p_2,p_3)$ at all.)

\noindent
{\bf Induction Hypothesis:} The theorem holds for $d-1$.

\noindent
{\bf Induction Step:} We prove the theorem for $d$. We may assume $d\ge 1$.
Assume, by way of contradiction, that $(p_1,\ldots,p_{d+2})$ form a cool sequence 
on an $d$-sphere.
Since $|p_1-p_2|=|p_1-p_3|=\cdots=|p_1-p_{d+2}|$ we know, by Lemma~\ref{le:sphere}, that
$p_2,p_3,\ldots,p_{d+2}$ are on an $(d-1)$-sphere.
Since $p_2,\ldots,p_{d+2}$ is a cool sequence 
this is impossible by the induction hypothesis.
\end{proof}

\begin{note}
The following related statement is well known:
{\it if there are $d+2$ points in $\real^d$ then
it is not the case that all $\binom{d+2}{2}$ distances
are the same.}
We have not been able to locate this result in an old fashion
journal (perhaps its behind a paywall); however, there is a proof
at mathoverflow.net here:

\noindent
\url{http://mathoverflow.net/questions/30270/}

\noindent
\url{maximum-number-of-mutually-equidistant-points-}

\url{in-an-n-dimensional-euclidean-space}

\end{note}

\begin{lemma}\label{le:nohomog}
Let $d\in\nat$.
Let $p_1,\ldots,p_{n}$ be points in $\real^d$.
Color $\binom{[n]}{2}$ via
$COL(i,j)=|p_i-p_j|$.
This coloring has no whomog set of size $d+3$.
\end{lemma}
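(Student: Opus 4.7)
The plan is to reduce the claim directly to Lemma~\ref{le:full}.

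First I would observe that whomog sets of size $d+3$ translate verbatim into cool sequences of length $d+3$. If $\{x_1,\ldots,x_{d+3}\}$ is the linear ordering witnessing the whomog property (see Definition~\ref{de:weak}), then for every $1\le i\le d$ and every $i<j<k\le d+3$ we have
$$|p_{x_i}-p_{x_j}|=COL(x_i,x_j)=COL(x_i,x_k)=|p_{x_i}-p_{x_k}|,$$
so each value $|p_{x_i}-p_{x_j}|$ with $j>i$ depends only on $i$. Setting $q_i=p_{x_i}$, the tuple $(q_1,\ldots,q_{d+3})$ satisfies the cool-sequence definition (Definition~\ref{de:cool}) in $\real^d$. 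The index ranges match: whomog permits $i$ up to $L-3$ and cool permits $i$ up to $n-3$, which agree when $L=n=d+3$.

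Next I would peel off the first point. Since $|q_1-q_j|=L_1$ for every $2\le j\le d+3$, the points $q_2,\ldots,q_{d+3}$ all lie on the sphere of radius $L_1$ centered at $q_1$. Under the standing assumption that the $p_i$ are distinct we have $L_1>0$, and a positive-radius sphere in $\real^d$ is contained in $\real^d$ but in no lower-dimensional affine subspace, so in the paper's terminology it is a $(d-1)$-sphere. By the remark immediately following Definition~\ref{de:cool}, the truncated tuple $(q_2,\ldots,q_{d+3})$ is itself a cool sequence, and it has length $d+2=(d-1)+3$.

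Finally I would invoke Lemma~\ref{le:full} with parameter $d-1$: it precludes the existence of any cool sequence of length $(d-1)+3$ on a $(d-1)$-sphere, contradicting what we have just constructed. The main thing to watch for is the degenerate case $L_1=0$, since then the ``sphere'' reduces to a single point and is not a $(d-1)$-sphere in the paper's sense; this is precisely where distinctness of the $p_i$ is used. I do not anticipate any further obstacle, so the argument should amount to a short unwinding of definitions followed by one application of Lemma~\ref{le:full}.
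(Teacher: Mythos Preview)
Your proposal is correct and follows essentially the same approach as the paper: translate the whomog set into a cool sequence, use equidistance from the first point to place the remaining $d+2$ points on a $(d-1)$-sphere, and then invoke Lemma~\ref{le:full}. You are somewhat more explicit than the paper about why the sphere is genuinely a $(d-1)$-sphere (the $L_1>0$ issue and the distinctness hypothesis), but the argument is the same.
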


\begin{proof}
Assume, by way of contradiction, that there exists a whomog set of size $d+3$.
By renumbering we can assume the whomog set is $[d+3]$.
Clearly $p_1,\ldots,p_{d+3}$ form a cool sequence.
Note that our not-caring about $COL(d+1,d+2)=COL(d+1,d+3)$ in the definition of whomog
is reflected in our not-caring about $|p_{d+1}-p_{d+2}|=|p_{d+1} - p_{d+3}|$ in the definition of a cool
sequence.

Since $|p_1-p_2|=|p_1-p_3|=\cdots=|p_1-p_{d+3}|$,
$p_2,\ldots,p_{d+3}$ are on the $(d-1)$-sphere (centered at $p_1$).
This contradicts Lemma~\ref{le:full}.
\end{proof}

\section{Lower Bound on $h_{2,d}(n)$}\label{se:main}

We defined $WER(k_1,k_2)$ in terms of colorings with co-domain $\omega$.
In our application we will actually use colorings with co-domain $\Rpos$.
The change in our results to accommodate this is only
a change of notation.  Hence we use our lower bounds on $WER(k_1,k_2)$
in this context without mention.

\begin{theorem}
For all $d\ge 1$, $h_{2,d}(n)=\resd$.
\end{theorem}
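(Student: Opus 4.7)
The plan is to combine the asymmetric weak canonical Ramsey bound of Theorem~\ref{th:gbounds} with the geometric obstruction of Lemma~\ref{le:nohomog}. Given distinct points $p_1,\ldots,p_n\in\real^d$, I would define $COL:\binom{[n]}{2}\to\Rpos$ by $COL(i,j)=|p_i-p_j|$. A rainbow set in this coloring is precisely a subset whose pairwise distances are all distinct, which is exactly what $h_{2,d}$ asks for. By Lemma~\ref{le:nohomog}, $COL$ admits \emph{no} whomog set of size $d+3$, so the ``whomog side'' of Theorem~\ref{th:introer} is automatically blocked.

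Next I would apply Theorem~\ref{th:gbounds} with $k_1=d+3$: whenever
\[
n \ge WER(d+3,k_2) \le \frac{(Ck_2)^{6(d+3)-18}}{(\log k_2)^{2(d+3)-6}} = \frac{(Ck_2)^{6d}}{(\log k_2)^{2d}},
\]
at least one of the two alternatives must occur, and since the whomog one cannot, there must be a rainbow set of size $k_2$. So the task reduces to a purely numerical one: find the largest $k_2$ for which the above inequality holds.

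To extract $k_2$ as a function of $n$ and $d$, I would take $6d$-th roots:
\[
Ck_2 \le n^{1/(6d)}(\log k_2)^{1/3}.
\]
Taking logarithms of both sides of the target equation $n = (Ck_2)^{6d}/(\log k_2)^{2d}$ yields $\log k_2 = \Theta\bigl((\log n)/d\bigr)$, and substituting this back gives
\[
k_2 = \Omega\!\left(\frac{n^{1/(6d)}(\log n)^{1/3}}{d^{1/3}}\right) = \resd,
\]
which is exactly the claimed bound.

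The only nontrivial step is tracking the dependence on $d$ carefully. The $d^{1/3}$ in the denominator comes from the $(\log k_2)^{1/3}$ factor, together with the fact that $\log k_2 \approx (\log n)/(6d)$ when $k_1=d+3$; this is what I expect to be the main obstacle in a clean write-up, since everything else is a direct plug-and-chug into Theorem~\ref{th:gbounds} and Lemma~\ref{le:nohomog}. Once the bookkeeping is done, the theorem follows immediately.
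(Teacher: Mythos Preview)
Your proposal is correct and follows exactly the same approach as the paper: define the distance coloring, invoke Lemma~\ref{le:nohomog} to rule out whomog sets of size $d+3$, and then apply Theorem~\ref{th:gbounds} with $k_1=d+3$ to force a rainbow set of the stated size. In fact, you carry out the inversion of the bound $WER(d+3,k_2)\le (Ck_2)^{6d}/(\log k_2)^{2d}$ more explicitly than the paper does, which simply asserts that $k=\resd$ suffices.
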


\begin{proof}
Let $P=\{p_1,\ldots,p_n\}$ be $n$ points in $\real^d$.
Let $COL:\binom{[n]}{2}\into\real$
defined by  $COL(i,j)=|p_i-p_j|$.

Let $k$ be the largest integer such that
$n\ge WER(d+3,k)$.
By Theorem~\ref{th:gbounds} it will suffice to take $k=\resd$.
By the definition of $WER_3(d+3,k)$
there is either a whomog set of size $d+3$ or a rainbow
set of size $k$. By Lemma~\ref{le:nohomog} there cannot be such a whomog set,
hence must be a rainbow set of size $k$.
\end{proof}

\section{Lower Bounds on $h_{3,2}$ and $h_{3,3}$}\label{se:tri}

For the problem of $h_{2,d}$ we used (1) upper bounds on the asymmetric weak canonical Ramsey theorem and
(2) a geometric lemma.
Here we will use the same approach though our version of the asymmetric weak canonical Ramsey theorem 
does not involve reordering the vertices.

\subsection{The Asymmetric 3-ary Canonical Ramsey Theorem}

\begin{definition}
Let $COL:\binom{[n]}{a}\into\omega$.
Let $V\subseteq [n]$.
\begin{enumerate}
\item
Let  $I\subseteq [a]$.
The set $V$ is {\it $I$-homogenous} (henceforth {\it $I$-homog}) if 
for all $x_1<\cdots<x_a\in\binom{[n]}{a}$ and $y_1<\cdots<y_a \in \binom{[n]}{a}$,
$$(\forall i\in I)[x_i=y_i] \hbox{ iff } COL(x_1,\ldots,x_a)=COL(y_1,\ldots,y_a).$$
Informally, the color of an element of $\binom{[n]}{a}$ depends exactly on the coordinates in $I$.

\item
The set $V$ is {\it rainbow} if 
every edge in $\binom{V}{a}$ is colored differently.
Note that this is just an $I$-homog set where $I=[a]$.
\end{enumerate}
\end{definition}

We will need the asymmetric hypergraph Ramsey numbers and $a$-ary \Erdosns-Rado numbers.

\begin{definition}
Let $a\ge 1$.
Let $k_1,k_2,\ldots,k_c\ge 1$. 
\begin{enumerate}
\item
Let $COL:\binom{[n]}{a}\into [c]$.
(Note that there is a bound on the number of colors.)
Let $V\subseteq [n]$. The set $V$ is {\it homog with color $i$}
if $COL$ restricted to $\binom{V}{a}$ always returns $i$.
\item
$R_a(k_1,k_2,\ldots,k_c)$ is
the least $n$ such that, for all $COL:\binom{[n]}{a}\into [c]$,
there exists $1\le i\le c$ and a homog set of size $k_i$ with color $i$.
$R_a(k_1,k_2,\ldots,k_c)$ is known to exist by the hypergraph Ramsey theorem.
\item
$ER_a(k_1,k_2)$ is
the least $n$ such that, for all $COL:\binom{[n]}{a}\into \omega$,
there exists either (1) an $I\subset [a]$ (note that this is a proper subset) 
and an $I$-homog set of size $k_1$, or
(2) a rainbow set of size $k_2$.
$ER_a(k_1,k_2)$ is known to exist by the $a$-ary canonical Ramsey theorem.
\end{enumerate}
\end{definition}

\begin{definition}\label{de:weak3}
Let $a\ge 3$.
\begin{enumerate}
\item
Let $COL:\binom{[n]}{a}\into\omega$.
Let $V\subseteq [n]$. Let $I\subset [a]$.
The set $V$ is {\it $I$-weakly homogenous} (henceforth {\it $I$-whomog}) if 
for all $x_1,\ldots,x_a,y_1,\ldots,y_a\in[n]$
$$(\forall i\in I)[x_i=y_i] \implies COL(x_1,\ldots,x_a)=COL(y_1,\ldots,y_a).$$
(Note that this differs slightly from the $a=2$ case it that we
do not change around the ordering.)
\item
Let $COL:\binom{[n]}{a}\into\omega$.
Let $V\subseteq [n]$. 
The set $V$ is {\it weakly homogenous} (henceforth {\it whomog}) if 
there is an $I\subset [a]$ (note that this is proper subset) such that
$V$ is $I$-whomog.
\item
Let $k_1,k_2\in\nat$. We denote the least $n$ such that,
for all $COL:\binom{[n]}{a}\into\omega$, there is either a whomog set of size $k_1$
or a rainbow set of size $k_2$, by $WER_a(k_1,k_2)$.
$WER_a(k_1,k_2)$ is known to exist by the $a$-ary canonical Ramsey theorem.
\end{enumerate}
\end{definition}

\begin{note}
Note that if a set is $\{1\}$-whomog then its also $\{1,2\}$-whomog.
\end{note}

A modification of the bound on $ER_3(k)$ by Lefmann and \Rodlns~\cite{canRamsey3} yields

$$
ER_3(k_1,k_2) \le R_4(6,6,6,6,k_1,k_1,k_1,k_1,\ceil{\frac{k_1^3}{4}},\ceil{\frac{k_1^3}{4}},2k_1^3,
\ceil{\frac{k_2^5}{36}}).
$$

We get better bounds on $WER_3(k_1,k_2)$.

We first need the $k=3$ case of Lemma 3 of~\cite{canRamsey3}
which we state:

\begin{lemma}\label{le:cap}
Let $COL:\binom{X}{3}\into\omega$ be 
such that, for all $S,T\in \binom{X}{3}$, with $|S\cap T|=2$,
$COL(S)\ne COL(T)$. Then there exists a rainbow set of size
$\ge \Omega(|X|^{1/5})$ .
\end{lemma}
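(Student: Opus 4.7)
The plan is a probabilistic sampling plus deletion argument on $X$, whose success hinges on a structural reduction: for each $v \in X$, the induced pair coloring $COL_v \st \binom{X \setminus \{v\}}{2} \into \omega$ given by $COL_v(\{u,w\}) = COL(\{v,u,w\})$ is a proper edge coloring of $K_{n-1}$ (where $n = |X|$), since two adjacent pair-edges correspond to triples sharing two elements and hence receive different colors. In particular every color class of $COL_v$ is a matching, of size at most $n/2$. Separately, under $COL$ itself the triples of any single color form a partial Steiner triple system, so each color class has at most $O(n^2)$ triples.

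Call a pair of distinct triples $\{S,T\}$ \emph{bad} if $COL(S) = COL(T)$; by hypothesis $|S \cap T| \in \{0,1\}$. Let $B_0$ and $B_1$ count bad pairs with $|S \cap T| = 0$ and $|S \cap T| = 1$ respectively. Write $N_c$ for the number of triples of color $c$ and $N_c^v$ for the number of those containing $v$. The Steiner bound $N_c = O(n^2)$ together with $\sum_c N_c = \binom{n}{3}$ gives $B_0 \le \sum_c \binom{N_c}{2} = O(n^5)$. For $B_1$, each bad pair has a unique shared vertex $v$, and the matching property of $COL_v$ (which bounds each $N_c^v$ by $n/2$, together with $\sum_c N_c^v = \binom{n-1}{2}$) gives $\sum_c \binom{N_c^v}{2} = O(n^3)$; summing over $v$ yields $B_1 = O(n^4)$.

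Finally, sample $Y \subseteq X$ by including each element independently with probability $p = c n^{-4/5}$. Then $E[|Y|] = c n^{1/5}$, while the expected number of bad pairs with both triples in $Y$ is at most $p^6 B_0 + p^5 B_1 = O(c^6 n^{1/5}) + O(c^5)$, since disjoint and one-intersecting bad pairs span six and five vertices respectively. Deleting one vertex per realized bad pair leaves a rainbow subset, and for $c$ small enough its expected size is $\Omega(n^{1/5})$, yielding the claim. The main obstacle is obtaining the sharp bound $B_1 = O(n^4)$: the naive estimate $B_1 \le B_0 + B_1 = O(n^5)$ would force $p$ of order at most $n^{-1}$ and collapse $E[|Y|]$ to a constant. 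The factor-of-$n$ saving, extracted from viewing each $COL_v$ as a proper edge coloring, is precisely what lets the exponent $1/5$ emerge.
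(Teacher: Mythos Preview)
The paper does not give its own proof of this lemma; it merely quotes it as the $k=3$ case of Lemma~3 in Lefmann--R{\"o}dl~\cite{canRamsey3}. Your argument is correct and is precisely the standard Lefmann--R{\"o}dl random sampling plus deletion proof: the key ingredients---that each $COL_v$ is a proper edge coloring (giving $B_1=O(n^4)$) and that each color class of $COL$ is a linear triple system (giving $B_0=O(n^5)$)---together with sampling at rate $p\asymp n^{-4/5}$ are exactly what appears there, so there is nothing to add.
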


\begin{lemma}\label{le:wer3}
$WER_3(k_1,k_2)\le R_4(k_1,k_1+2,k_1+2,k_1,k_1+2,k_1,k_2^5)$
\end{lemma}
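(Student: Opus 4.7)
The plan is to reduce to the 4-uniform hypergraph Ramsey theorem via an auxiliary 7-coloring. Let $n=R_4(k_1,k_1+2,k_1+2,k_1,k_1+2,k_1,k_2^5)$ and fix any $COL:\binom{[n]}{3}\to\omega$. For each $S=\{a<b<c<d\}\in\binom{[n]}{4}$, denote by $T^{(i)}_S$ the 3-subset of $S$ obtained by deleting the $i$-th smallest element. There are $\binom{4}{2}=6$ unordered pairs of triples $\{T^{(i)}_S,T^{(j)}_S\}$, each sharing a particular 2-element subset of $S$ sitting in particular positions inside the two triples. I would define $\chi:\binom{[n]}{4}\to\{1,\dots,7\}$ by selecting, under a fixed priority on these six pairs, the first pair for which $COL(T^{(i)}_S)=COL(T^{(j)}_S)$; if all four $COL$-values are distinct, set $\chi(S)=7$.

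Order the six pairs so that the three \emph{doubly-aligned} pairs---those that share a 2-subset in matching positions inside both triples (namely $\{T^{(3)},T^{(4)}\}$, $\{T^{(2)},T^{(3)}\}$, $\{T^{(1)},T^{(2)}\}$, aligned on positions $\{1,2\}$, $\{1,3\}$, $\{2,3\}$ respectively)---receive colors $1,4,6$, and the three remaining \emph{partially-aligned} pairs (two aligned on a single position, one on no positions) receive colors $2,3,5$. Applying the 4-uniform hypergraph Ramsey theorem to $\chi$ yields a $\chi$-monochromatic set $Y\subseteq[n]$ of the size prescribed by its color.

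The proof then concludes with case analysis on the color of $Y$. If $\chi\equiv c$ on $\binom{Y}{4}$ with $c\in\{1,4,6\}$ and $|Y|=k_1$, the corresponding doubly-aligned equality holds on every 4-subset of $Y$, and this translates directly into $Y$ being $I$-whomog for the $I$ of size $2$ associated with that color, giving the required whomog set of size $k_1$ with no trimming. If $c\in\{2,3,5\}$ and $|Y|=k_1+2$, the homog equality (e.g.\ $COL(a,b,c)=COL(a,c,d)$ for color $2$) is weaker; I would chain it through overlapping 4-subsets of $Y=\{y_1<\cdots<y_s\}$, fixing the matched coordinate(s) and sliding the unmatched ones upward, to show that the color is determined by the matched position(s) everywhere except possibly on triples containing the top one or two elements of $Y$. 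Trimming those offending boundary elements yields a whomog set of size $k_1$. Finally, if $\chi\equiv 7$ on $\binom{Y}{4}$ with $|Y|=k_2^5$, every 4-subset of $Y$ has four distinct 3-subset $COL$-values; since any two 3-subsets of $Y$ sharing two elements lie in a common 4-subset, they receive distinct $COL$-values, so the hypothesis of Lemma~\ref{le:cap} holds and it yields a rainbow subset of $Y$ of size $\Omega(|Y|^{1/5})=\Omega(k_2)$.

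The main obstacle is the case analysis for colors $\{2,3,5\}$, where the weaker homog equality must be iterated carefully through overlapping 4-subsets to pin down the correct whomog type and to verify that a bounded trim (at most two boundary elements) suffices, justifying the $+2$ buffer in the Ramsey thresholds. Setting up the priority so that the strongest structural conclusions come first also requires care, since a homogeneous set in a later color only guarantees that the earlier equalities fail, and this extra information has to be reconciled with the chaining argument in each trimming step.
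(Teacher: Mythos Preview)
Your proposal is essentially the paper's own proof: the same auxiliary 7-coloring of $\binom{[n]}{4}$ built from the six pairwise equalities among the four 3-subsets, the same split into the three ``doubly-aligned'' pairs (colors $1,4,6$, sizes $k_1$, giving $\{1,2\}$-, $\{1,3\}$-, $\{2,3\}$-whomog directly) versus the three ``partially-aligned'' pairs (colors $2,3,5$, sizes $k_1+2$, requiring a chaining argument and a trim of two elements), and the same appeal to Lemma~\ref{le:cap} for color $7$.

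One small correction to keep in mind when you write out the details: your description says the trim always removes ``the top one or two elements of $Y$''. That is right for colors $2$ and $3$ (the paper removes the two largest and obtains $\{1\}$-whomog and $\emptyset$-whomog respectively), but for color $5$ the single aligned position is position $3$, the chaining slides the unmatched coordinates \emph{downward}, and you must remove the two \emph{smallest} elements to get a $\{3\}$-whomog set. Also, the priority among the six equalities is needed only for color $7$ (to guarantee that any two triples sharing two points have distinct colors); in each of colors $1$--$6$ the argument uses only the affirmed equality, not the negation of the earlier ones, so your closing worry about ``reconciling'' is unnecessary.
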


\begin{proof}
Let $n=R_4(k_1,k_1+2,k_1+2,k_1,k_1+2,k_1,k_2^5)$.

We are given $COL:\binom{[n]}{3}\into \omega$.
We use $COL$ to obtain a $COL':\binom{[n]}{4}\into [7]$.
We will use the (ordinary) 3-ary Ramsey theorem.

We define $COL'(x_1<x_2<x_3<x_4)$ by looking at $COL$ on all
$\binom{4}{3}$ triples of $\{x_1,x_2,x_3,x_4\}$ and
see how their colors compare to each other.

For each case we assume the negation of all the prior cases.
In each case, we indicate what happens if this is the color of the
infinite homog set.

In all the cases below we use the following notation: 
if we are referring to a set $X$ 
and $x\in X$ then $x^+$ is the next element of $X$ after $x$.

\begin{enumerate}

\item
If $COL(x_1,x_2,x_3)=COL(x_1,x_2,x_4)$ then $COL'(x_1,x_2,x_3,x_4)=1$.
Assume $X$ is a homog set of size $k_1$ with color 1.
Clearly $X$ is $\{1,2\}$-whomog for $COL$.

\item
If $COL(x_1,x_2,x_3)=COL(x_1,x_3,x_4)$ then $COL'(x_1,x_2,x_3,x_4)=2$.
Assume $X$ is a homog set of size $k_1+2$ with  color 2.
Let $z_1,z_2$ be the largest two elements of $X$.
We show that $X-\{z_1,z_2\}$ is $\{1\}$-whomog for $COL$.
Assume (1) $x_1<x_2<x_3$, (2) $x_1<y_2<y_3$, and
(3) $x_1,x_2,x_3,y_2,y_3\in X-\{z_1,z_2\}$.
We need $COL(x_1,x_2,x_3)=COL(x_1,y_2,y_3)$. 
$$COL(x_1,x_2,x_3)=COL(x_1,x_3,x_3^+)=\cdots=COL(x_1,z_1,z_2)$$
and
$$COL(x_1,y_2,y_3)=COL(x_1,y_3,y_3^+)=\cdots=COL(x_1,z_1,z_2).$$
Hence they equal each other. 

\item
If $COL(x_1,x_2,x_3)=COL(x_2,x_3,x_4)$ then $COL'(x_1,x_2,x_3,x_4)=3$.
Assume $X$ is a homog set of size $k_1+2$ with  color 3.
Let $z_1,z_2$ be the largest two elements of $X$.
We show that $X-\{z_1,z_2\}$ is $\es$-whomog for $COL$ 
(all edges are the same color).
Note that all triples of the form $(x,x^+,x^{++}\}$ have the same color.  Denote that color $RED$.
Assume (1) $x_1<x_2<x_3$, (2) $y_1<y_2<y_3$, 
and (3) $x_1,x_2,x_3,y_1,y_2,y_3\in X-\{z_1,z_2\}$.
We need $COL(x_1,x_2,x_3)=COL(y_1,y_2,y_3)$. 
Note that
$$COL(x_1,x_2,x_3)=COL(x_2,x_3,x_3^+)=COL(x_3,x_3^+,x_3^{++})=RED.$$
By the same reasoning $COL(y_1,y_2,y_3)=RED$.
(Note that it is possible that $x_3^+,x_3^{++},y_3^+,y_3^{++}\in \{z_1\}$ or
$x_3^{++},y_3^{++}\in \{z_2\}$.)

\item
If $COL(x_1,x_2,x_4)=COL(x_1,x_3,x_4)$ then $COL'(x_1,x_2,x_3,x_4)=4$.
Assume $X$ is a homog set of size $k_1$ with color 4.
Clearly $X$ is $\{1,3\}$-whomog for $COL$.

\item
If $COL(x_1,x_2,x_4)=COL(x_2,x_3,x_4)$ then $COL'(x_1,x_2,x_3,x_4)=5$.
Assume $X$ is a homog set of size $k_1+2$ with color 5.
Let $z_1,z_2$ be the smallest elements of $X$.
Then $X-\{z_1,z_2\}$ is $\{3\}$-whomog for $COL$ by the same reasoning as in part 2.

\item
If $COL(x_1,x_3,x_4)=COL(x_2,x_3,x_4)$ then $COL'(x_1,x_2,x_3,x_4)=6$.
Assume $X$ is a homog set of size $k_1$ with color 6.
Clearly $X$ is $\{2,3\}$-whomog for $COL$.

\item
If none of the above happen then $COL'(x_1,x_2,x_3,x_4)=7$.
Assume $X$ is a homog set of size $k_2^5$ with color 7.
$COL$ restricted to $\binom{X}{3}$ 
satisfies the premise of Lemma~\ref{le:cap}:
If $S,T\in \binom{X}{3}$ with $|S\cap T|=2$ then since
$COL'(S\cup T)\notin \{1,2,3,4,5,6\}$,
$COL(S)\ne COL(T)$. 
By Lemma~\ref{le:cap} there exists a rainbow set of size 
$|X|^{1/5}\ge k_2$.
\end{enumerate}

\end{proof}

\begin{lemma}\label{le:hramsey}
Let $a\ge 3$,  $c\ge 2$, and $k_1,\ldots,k_c\ge 1$.
Let $P=k_1\cdots k_{c-1}$ and $S=k_1+\cdots+k_{c-1}$.
\begin{enumerate}
\item
$R_a(k_1,k_2,\ldots,k_c) \le c^{R_{a-1}(k_1-1,k_2-1,\ldots,k_c-1)^{a-1}}$.
\item
Let 
$$
Z=\{\sigma \in [c]^* \st \hbox{ for all $i\in [c]$, $\sigma$ contains at most }k_i-1\hbox{ $i$'s  } \}.
$$
Then

$$\sum_{\sigma\in Z} |\sigma| \le P(k_c+S)^{S+2}.$$

\item
$R_3(k_1,\ldots,k_c) \le  c^{P(k_c+S)^{S+2}}.$
\item 
$R_4(k_1,\ldots,k_c) \le c^{c^{3P(k_c+S-c)^{S+2-c}}}.$
\item
For almost all $k$, $WER_3(e,k) \le \werthree.$
\item
$WER_3(6,k)\le \werthreesix.$
\item
$WER_3(13,k)\le \werthreeth.$
\end{enumerate}
\end{lemma}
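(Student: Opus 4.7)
The overall plan is to prove items (1)--(4) as standard hypergraph Ramsey upper bounds and then derive (5)--(7) by substitution into Lemma~\ref{le:wer3}.

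For item (1) I would use the classical \Erdosns-Rado stepping-up argument: set $m=R_{a-1}(k_1-1,\ldots,k_c-1)$ and $N=c^{m^{a-1}}$, and given $\chi:\binom{[N]}{a}\to[c]$, iteratively pick vertices $v_1,v_2,\ldots$ with a shrinking chain $N\supseteq N_1\supseteq\cdots$ so that, for every $(a-1)$-subset $T\subseteq\{v_1,\ldots,v_i\}$ and every $v\in N_i$, $\chi(T\cup\{v\})$ depends only on $T$. At stage $i+1$, the refinement imposes $\binom{i}{a-2}$ new conditions (one per $(a-2)$-subset of $\{v_1,\ldots,v_i\}$ paired with $v_{i+1}$), and $\sum_{i<m}\binom{i}{a-2}=\binom{m}{a-1}\le m^{a-1}$, so $N$ is sized precisely to survive $m$ stages. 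Applying $R_{a-1}(k_1-1,\ldots,k_c-1)$ to the induced $c$-coloring of $\binom{\{v_1,\ldots,v_m\}}{a-1}$ produces a monochromatic $(a-1)$-set of size $k_i-1$ in some color $i$; appending any vertex from $N_m$ gives the desired $a$-monochromatic set of size $k_i$.

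For item (2) I would decompose each $\sigma\in Z$ into its subsequence of non-$c$ letters (a string over $[c-1]$ with letter $i$ used at most $k_i-1$ times, hence length at most $S-(c-1)$) together with the $m_c\le k_c-1$ positions in $\sigma$ occupied by letter $c$. Weighted summation of $|\sigma|\binom{|\sigma|}{m_c}$ over $m_c$ and then over the non-$c$ subsequences, together with the elementary bound $\sum_{m_c<k_c}|\sigma|\binom{|\sigma|}{m_c}\le(k_c+S)^{S+2}$ and the crude count $P$ for the number of non-$c$ subsequence types, yields the stated inequality. Items (3) and (4) then follow by cascading: item (2) gives a direct bound on $R_2$ via the standard Ramsey-tree construction; feeding that into item (1) with $a=3$ yields item (3), and one more application of item (1) with $a=4$, fed by item (3), yields item (4). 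The two $-c$ shifts in the exponents of item (4) telescope the successive $k_i\mapsto k_i-1$ reductions introduced at each application of the \Erdosns-Rado recurrence.

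For items (5)--(7) I invoke Lemma~\ref{le:wer3}, giving $WER_3(k_1,k_2)\le R_4(k_1,k_1+2,k_1+2,k_1,k_1+2,k_1,k_2^5)$ with $c=7$. Setting $k_1=e$ and $k_2=k$ yields $S=6e+6$, $P=e^3(e+2)^3=O_e(1)$, and $k_c=k^5$, so the inner exponent appearing inside item (4) is $3P(k^5+6e-1)^{6e+1}=O(k^{30e+5})$. Because $7^{7^X}\le 2^{2^{3X+2}}$ (using $\log_2 7<3$), substituting $X=O(k^{30e+5})$ gives $WER_3(e,k)\le 2^{2^{k^{30e+6}}}=\werthree$ for $k$ sufficiently large, proving item (5); items (6) and (7) are the specializations $e=6$ (where $30\cdot 6+6=186$) and $e=13$ (where $30\cdot 13+6=396$). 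The main obstacle is the bookkeeping inside items (2)--(4): one must track the shifts $k_i\mapsto k_i-1$ introduced at each stepping-up so that the final exponent in item (4) lands at exactly $S+2-c$ and the base at $k_c+S-c$. Once that is in hand, the remaining power-tower manipulations in items (5)--(7) are routine.
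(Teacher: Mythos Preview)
Your treatment of item~(1) and the computations in items~(5)--(7) match the paper's approach. Item~(2) you handle by a different decomposition (splitting off the letter~$c$) rather than the paper's direct multinomial estimate; either route is fine.

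The real gap is in your derivation of item~(3). You propose to bound $R_2$ using item~(2) and then step up once via item~(1). But item~(1) with $a=3$ gives
\[
R_3(k_1,\ldots,k_c)\le c^{R_2(k_1-1,\ldots,k_c-1)^{\,2}},
\]
and squaring any polynomial-in-$k_c$ bound on $R_2$ doubles the exponent on $(k_c+S)$: you would land at something like $c^{P^2(k_c+S)^{2S+O(1)}}$, not $c^{P(k_c+S)^{S+2}}$. Moreover, the quantity $\sum_{\sigma\in Z}|\sigma|$ bounded in item~(2) is not an $R_2$ bound at all---the standard Ramsey tree gives $R_2\le |Z|$, not $\sum|\sigma|$. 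The paper instead obtains item~(3) from the Conlon--Fox--Sudakov argument for $3$-uniform hypergraphs (extended to $c$ colors in~\cite{sandow}), which yields \emph{directly}
\[
R_3(k_1,\ldots,k_c)\le c^{\sum_{\sigma\in Z}|\sigma|},
\]
and only then is item~(2) applied to bound the exponent. Item~(4) then follows from one application of item~(1) to item~(3), exactly as you say.

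This is not a cosmetic issue: the doubled exponent from your route would propagate through item~(4) and roughly double the constant multiplying $e$ in item~(5), so the specific numbers $186=30\cdot 6+6$ and $396=30\cdot 13+6$ in items~(6) and~(7) would no longer come out. Your calculations in item~(5) are correct, but they presuppose item~(4) as stated, which in turn requires the CFS-based proof of item~(3).
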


\begin{proof}

\noindent 
1) \Erdosns-Rado~\cite{ErdosRado,sandow,GRS} showed that $\rakt \le  2^{\binom{\ramkmt+1}{a-1}}+a-2.$
This can be modified to show
$$R_a(k_1,k_2,\ldots,k_c) \le c^{\binom{R_{a-1}(k_1-1,\ldots,k_c-1)}{a-1}+a-2}.$$
Our result easily follows.

\medskip

\noindent
2) 
Clearly 

\[
\begin{array}{rl}
\sum_{\sigma\in Z} |\sigma|& = \sum_{j_1 = 0}^{k_1-1} \cdots \sum_{j_c = 0}^{k_c-1} (j_1 + \ldots + j_c) \frac{(j_1 + \ldots + j_c)!}{j_1! \cdots j_c!}\le \sum_{j_1 = 0}^{k_1-1} \cdots \sum_{j_c = 0}^{k_c-1} (k_c+S) \frac{(k_c+S)!}{k_c!}\cr
&\le P\sum_{j_c=0}^{k_c-1} (k_c+S)^{S+1}\le Pk_c(k_c+S)^{S+1}\le P(k_c+S)^{S+2}\cr
\end{array}
\]

\smallskip

\noindent
3) Conlon, Fox, and Sudakov~\cite{conlonfoxsud} have the best known
upper bounds on $R_3(k,k)$. Gasarch, Parrish, Sandow~\cite{sandow} have done a straightforward analysis
of their proof to extend it to $c$ colors. A modification of that proof yields
$$R_3(k_1,\ldots,k_c) \le c^{\sum_{\sigma\in Z} |\sigma|}.$$
Our result follows.

\smallskip

\noindent
4) This follows from parts 1 and 3. We could obtain a better result by
replacing $P$ by $(k_1-1)\cdots(k_c-1)$ but that would not help us later.

\smallskip

\noindent
5) By Lemma~\ref{le:wer3}

$$WER_3(e,k) \le R_4(e,e+2,e+2,e,e+2,e,k^5).$$

Let  $s(e)= e+(e+2)+(e+2)e+(e+2)+e=\se$.
Let $p(e)$ be the product of these terms.
By parts 2 and 5, for $k$ large, we have the following.

$$WER_3(e,k)
\le 7^{7^{3p(e)(k^5+s(e)-7)^{s(e)-5}}} 
\le 7^{7^{3p(e)(2k^5)^{s(e)-5}}} 
\le 7^{7^{(6p(e)k^5)^{s(e)-5}}} 
\le 2^{2^{(36p(e)k^5)^{s(e)-5}}} 
$$

Let $f(e)=(36p(e))^{s(e)-5}$. Then we have

$$WER_3(e,k)\le  2^{2^{f(e)k^{5s(e)-25}}} \le  2^{2^{k^{5s(e)-24}}}\le \werthree.$$

\noindent
6) This follows from part 5.

\noindent
7) This follows from part 5.

\end{proof}

\subsection{Geometric Lemmas}\label{se:geomtri}

\begin{definition}
Let $d\in\nat$.
If $p,q,r\in \real^d$ then let $AREA(p,q,r$) be the area of the triangle
with vertices $p,q,r$.
\end{definition}

The next lemma is 
Lemma 4 of~\cite{extri} whose proof is in the appendix of that paper.
They credit~\cite{regvert}, which is unavailable, with the proof.

\begin{lemma}\label{le:cyl}
Let $C_1,C_2,C_3$ be three cylinders with no pair of parallel axis.
Then $C_1\cap C_2 \cap C_3$ consists of at most 8 points.
\end{lemma}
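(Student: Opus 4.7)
The plan is algebraic, via Bezout's theorem. Each cylinder $C_i \subseteq \real^3$ is the zero set of a quadratic polynomial
\[ f_i(p) = |p - p_0^{(i)}|^2 - ((p - p_0^{(i)}) \cdot \vec{v}_i)^2 - r_i^2, \]
where $\vec{v}_i$ is a unit direction vector of the axis $L_i$, $p_0^{(i)} \in L_i$, and $r_i$ is the radius. Thus each $C_i$ is a quadric surface of degree $2$, and $C_1 \cap C_2 \cap C_3$ is the common zero locus of three quadrics in $\real^3$ (or, passing to complex projective $3$-space, in $\complex\mathbb{P}^3$). Bezout's theorem then yields at most $2 \cdot 2 \cdot 2 = 8$ intersection points, provided the intersection is $0$-dimensional.

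The bulk of the work is verifying $0$-dimensionality, which I would do in two pair-then-triple stages. First, for any pair $C_i, C_j$: the two cylinders are distinct irreducible quadrics, so they share no $2$-dimensional component; moreover, any line lying in both must be parallel to both $L_i$ and $L_j$, contradicting the non-parallelism hypothesis. Hence $C_i \cap C_j$ is a pure $1$-dimensional algebraic curve of degree at most $4$, by Bezout applied to two quadrics. Second, having established that $C_1 \cap C_2$ is such a quartic curve, I would argue that no irreducible component can lie entirely inside $C_3$: any surviving component would have to be a curve simultaneously inscribed in all three ruled surfaces, forcing compatibilities among the three rulings that the pairwise non-parallelism of the axes forbids.

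Once $0$-dimensionality is in hand, Bezout's theorem delivers
\[ \#(C_1 \cap C_2 \cap C_3) \le 8, \]
where the count is over $\complex$ with multiplicity; the real count without multiplicity is bounded by the same number, proving the lemma. The main obstacle is the final dimension check, namely explicitly ruling out that a component of the quartic curve $C_1 \cap C_2$ is swallowed by $C_3$. This is where the full pairwise non-parallelism assumption is essential: if two axes were parallel, $C_1 \cap C_2$ could degenerate into a pair of parallel lines that might lie on a specially positioned third cylinder, producing infinitely many common points and ruining the bound. Carefully carrying out this non-parallelism argument is the one subtle step; the final Bezout bound is then immediate.
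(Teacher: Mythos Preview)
The paper does not actually prove this lemma; it quotes it as Lemma~4 of \cite{extri} (with proof in that paper's appendix, attributed in turn to \cite{regvert}). So there is no ``paper's own proof'' to compare against, and your B\'ezout approach is almost certainly what the cited source does as well: three quadrics in $\complex\mathbb{P}^3$ meet in $2\cdot2\cdot2=8$ points once the intersection is zero-dimensional.

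That said, the zero-dimensionality check you flag as ``the one subtle step'' is genuinely incomplete in your sketch, and your heuristic about rulings does not cover the real cases that arise. Two specific gaps:
\begin{itemize}
\item Two right circular cylinders with non-parallel axes \emph{can} share a full conic: a non-circular ellipse with semi-axes $a>b$ lies on exactly two circular cylinders (both of radius $b$, axes symmetric about the plane of the ellipse). So a conic component of $C_1\cap C_2$ is not ruled out by non-parallelism alone; you need the further observation that \emph{exactly two} such cylinders exist, so a third cannot contain the same ellipse.
\item For an irreducible quartic or cubic component $\gamma\subseteq C_1\cap C_2$, the ruling argument says nothing. The clean way to finish is to note that such a $\gamma$ lies only on the pencil $\lambda f_1+\mu f_2$; then check that the quadratic part $(\lambda+\mu)I-\lambda v_1v_1^{T}-\mu v_2v_2^{T}$ has rank $1$ only when $\lambda\mu=0$ (using $v_1\not\parallel v_2$), so no third circular cylinder sits in the pencil.
\end{itemize}
With those two observations (plus your line case), no real irreducible component of $C_1\cap C_2$ can lie on $C_3$, and the curve-surface B\'ezout bound $4\cdot 2=8$ finishes the argument. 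Your outline is correct; it just needs these pieces filled in rather than the informal appeal to ``compatibilities among the three rulings.''
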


\begin{lemma}\label{le:nohomogtri}~
\begin{enumerate}
\item
Let $p_1,\ldots,p_{n}$ be points in $\real^2$, no three collinear.
Color $\binom{[n]}{3}$ via
$COL(i,j,k)=AREA(p_i,p_j,p_k)$.
This coloring has no whomog set of size 6.
\item
Let $p_1,\ldots,p_{n}$ be points in $\real^3$, no three collinear.
Color $\binom{[n]}{3}$ via
$COL(i,j,k)=AREA(p_i,p_j,p_k)$.
This coloring has no whomog set of size 13.
\end{enumerate}
\end{lemma}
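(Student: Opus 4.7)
The plan is to rule out, for each proper $I\subsetneq[3]$, the existence of an $I$-whomog set of size $6$ (for part 1) or $13$ (for part 2). Since the condition $(\forall i\in I)[x_i=y_i]$ becomes weaker as $I$ grows, every $I$-whomog set is automatically $J$-whomog whenever $I\subseteq J$; so it suffices to rule out whomog for the three maximal proper subsets $I\in\{\{1,2\},\{1,3\},\{2,3\}\}$. The geometric fact used throughout is that the level set $\{r : AREA(p,q,r)=A\}$ is a cylinder with axis the line through $p$ and $q$ in $\real^3$, and a pair of lines parallel to $pq$ in $\real^2$.

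For part (2) in $\real^3$ the argument is a direct application of Lemma~\ref{le:cyl}. In each case I would pick three pairs of indices that share a common vertex, so the three resulting cylinders share a common point; ``no three collinear'' then guarantees the three axes are pairwise non-parallel, since two axes through a common point are parallel iff they coincide iff the defining triple is collinear. A counting check shows at least nine of the $13$ points lie in the triple intersection, exceeding the bound of $8$ from Lemma~\ref{le:cyl}. Concretely: for $I=\{1,2\}$ take the pairs $(1,2),(1,3),(1,4)$, so that $p_5,\dots,p_{13}$ lie on all three cylinders through $p_1$; for $I=\{1,3\}$ take $(1,11),(1,12),(1,13)$, giving $p_2,\dots,p_{10}$ in the triple intersection at $p_1$; for $I=\{2,3\}$ take $(10,13),(11,13),(12,13)$, giving $p_1,\dots,p_9$ in the triple intersection at $p_{13}$.

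For part (1) in $\real^2$ the analogue of Lemma~\ref{le:cyl} is too weak, because three double-lines with pairwise non-parallel axes can share up to four common points. Instead the plan is to use that ``no three collinear'' forces every individual line to contain at most two of the $p_i$, so any one double-line holds at most four. For $I=\{1,2\}$-whomog of size $6$, the pair $(1,2)$ then puts $p_3,\dots,p_6$ into a $2+2$ split on the two lines $y=\pm h$ parallel to $p_1p_2$. Choosing coordinates $p_1=(0,0)$, $p_2=(t,0)$, the area identities from the other pairs expand into equations of the shape $|a+c|=|2t-a-c|$, each of which has the unique solution $c=t-a$; running through the three $2+2$ labellings (up to the reflection $y\mapsto -y$) one finds in each case that two of $p_3,\dots,p_6$ are forced to coincide, a contradiction. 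The case $I=\{1,3\}$ (anchored at the pair $(1,6)$, so that $p_2,\dots,p_5$ split $2+2$) is handled by the same template, where some subcases instead produce a forced collinear triple such as $\{p_1,p_2,p_3\}$ through the origin, violating the hypothesis; and $I=\{2,3\}$ follows from $I=\{1,2\}$ by the relabelling $i\mapsto 7-i$. The main obstacle is this planar case analysis: no single ``intersection of three sets'' bound in $\real^2$ is strong enough on its own, so one must combine the pigeonhole split coming from one pair with the area-equation rigidity from several other pairs, carefully tracking sign cases.
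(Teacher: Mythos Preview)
Your reduction to the three maximal proper $I$'s is correct and is a small economy over the paper, which treats all proper $I$ explicitly (grouping them into three cases). For part~2 your argument is essentially the paper's: three cylinders, nine points in their common intersection, contradiction with Lemma~\ref{le:cyl}. Your axis choices differ slightly --- you always take three axes through a single vertex, whereas the paper sometimes uses the three sides of a triangle --- but both choices give pairwise non-parallel axes by the no-three-collinear hypothesis, so this is immaterial.

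For part~1 your route is genuinely different. The paper does not coordinatise and does not use a $2+2$ split. Instead it fixes \emph{three lines and two points} and, for each line $\ell$, uses the dichotomy ``$p_4p_5$ is parallel to $\ell$'' versus ``the midpoint of $p_4p_5$ lies on $\ell$''. By pigeonhole two of the three lines fall in the same branch; two parallels force the two lines to be parallel (hence a collinear triple among the anchors), and two lines containing the same midpoint force that midpoint to be their unique intersection point, making the two free points collinear with an anchor. Concretely, for $I\in\{\{1\},\{2\},\{1,2\}\}$ the paper uses the lines $p_1p_2,\,p_1p_3,\,p_2p_3$ and the points $p_4,p_5$; for $I=\{1,3\}$ the lines $p_1p_4,\,p_1p_5,\,p_1p_6$ and the points $p_2,p_3$ (this is the only case that needs six points); the remaining $I$ are handled by symmetry. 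The argument is coordinate-free and completely uniform across the cases.

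Your coordinate approach also works. In the $\{1,2\}$ case it is clean: combining the chains $COL(1,3,\cdot)$ and $COL(2,3,\cdot)$ yields $|a+x_j|=|2t-a-x_j|$ for every $p_j$ on the $y=-h$ line, forcing $x_j=t-a$; since the $2+2$ split puts two points there, they coincide. The $\{1,3\}$ case, however, splinters into several sign subcases as you anticipate, and in some of them one must chase several further equalities (from chains like $COL(2,\cdot,6)$ and $COL(3,\cdot,6)$) before finally reaching either a repeated point or the collinearity $p_3=-p_2$ through the origin. This is correct but noticeably longer than the paper's three-line pigeonhole. In short: the paper's parallel-versus-midpoint dichotomy buys uniformity and brevity; your approach buys a single normalising step (the $2+2$ split from one anchor pair) at the cost of a heavier algebraic endgame.
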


\begin{proof}

\noindent
1) Assume, by way of contradiction, there exists an $I$-whomog set of size 6.
By renumbering we can assume the $I$-whomog set is $\{1,2,3,4,5,6\}$.

\medskip

\noindent
{\bf Case 1:} $I=\{1\}$, $\{1,2\}$, or $\{2\}$.

We have $AREA(p_1,p_2,p_4)=AREA(p_1,p_2,p_5)$.
Thus $p_4$ and $p_5$ are either on a line parallel to $p_1p_2$
or are on different sides of $p_1p_2$. In the later case the
midpoint of $p_4p_5$ is on $p_1p_2$.

We have $AREA(p_1,p_3,p_4)=AREA(p_1,p_3,p_5)$.
Thus $p_4$ and $p_5$ are either on a line parallel to $p_1p_3$
or are on different sides of $p_1p_3$. In the later case the
midpoint of $p_4p_5$ is on $p_1p_3$.

We have $AREA(p_2,p_3,p_4)=AREA(p_2,p_3,p_5)$.
Thus $p_4$ and $p_5$ are either on a line parallel to $p_2p_3$
or are on different sides of $p_2p_3$. In the later case the
midpoint of $p_4p_5$ is on $p_2p_3$.

One of the following must happen.

\begin{itemize}
\item
Two of these cases have $p_4,p_5$ on the same side of the line.
We can assume that $p_4,p_5$ are on a line parallel to both $p_1p_2$ and $p_1p_3$.
Since $p_1,p_2,p_3$ are not collinear there is no such line.
\item
Two of these cases have $p_4,p_5$ on opposite sides of the line.
We can assume that the midpoint of $p_4p_5$ is on both $p_1p_2$
and $p_1p_3$. 
Since $p_1,p_2,p_3$ are not collinear the only point on both $p_1p_2$ and $p_1p_3$
is $p_1$. So the midpoint of $p_4,p_5$ is $p_1$. Thus $p_4,p_1,p_5$ are collinear
which is a contradiction.
\end{itemize}

Note that for $I=\{1\}$, $\{1,2\}$, or $\{2\}$ we used the line-point pairs
$$\{p_1p_2,p_1p_3,p_2p_3\}\times \{p_4,p_5\}.$$
For the rest of the cases we just specify which line-point pairs to use.

\medskip

\noindent
{\bf Case 2:} $I=\{3\}$ or $\{2,3\}$.
Use
$$\{p_4p_5,p_3p_5,p_3p_4\}\times \{p_1,p_2\}.$$

\medskip

\noindent
{\bf Case 3:} $I=\{1,3\}$
Use
$$\{p_1p_4,p_1p_5,p_1p_6\}\times\{p_2,p_3\}.$$
This is the only case that needs 6 points.

\medskip

\noindent
2) Assume, by way of contradiction, that there exists an $I$-whomog set of size 13.
By renumbering we can assume the $I$-whomog set is $\{1,\ldots,13\}$.

\medskip

\noindent
{\bf Case 1:} $I=\{1\}$, $\{1,2\}$, or $\{2\}$.

We have $AREA(p_1,p_2,p_4)=AREA(p_1,p_2,p_5)=\cdots=AREA(p_1,p_2,p_{12})$.
Hence $p_4,\ldots,p_{12}$ are all on a cylinder with axis $p_1p_2$.

We have $AREA(p_1,p_3,p_4)=AREA(p_1,p_3,p_5)=\cdots=AREA(p_1,p_3,p_{12})$.
Hence $p_4,\ldots,p_{12}$ are all on a cylinder with axis $p_1p_3$.

We have $AREA(p_2,p_3,p_4)=AREA(p_2,p_3,p_5)=\cdots=AREA(p_2,p_3,p_{12})$.
Hence $p_4,\ldots,p_{12}$ are all on a cylinder with axis $p_2p_3$.

Since $p_1,p_2,p_3$ are not collinear the three cylinders mentioned above
satisfy the premise of Lemma~\ref{le:cyl}. By that lemma there
are at most 8 points in the intersection of the three cylinders.
However, we just showed there are 9 such points. Contradiction.

Note that for $I=\{1\}$, $\{1,2\}$, or $\{2\}$  we used the line-point pairs
$$\{p_1p_2,p_1p_3,p_2p_3\}\times \{p_4,\ldots,p_{12}\}.$$
For the rest of the cases we just specify which line-point pairs to use.

\medskip

\noindent
{\bf Case 2:} $I=\{3\}$ or $\{2,3\}$.
Use
$$\{p_{11}p_{12},p_{10}p_{12},p_{10}p_{11}\}\times \{p_1,\ldots,p_9\}.$$

\medskip

\noindent
{\bf Case 3:} $I=\{1,3\}$
Use
$$\{p_1p_{11},p_1p_{12},p_1p_{13}\}\times\{p_2,\ldots,p_{10}\}.$$
This is the only case that needs 13 points.

\end{proof}

\subsection{Lower Bounds on $h_{3,2}(n)$ and $h_{3,3}(n)$}

\begin{theorem}~
\begin{enumerate}
\item
$h_{3,2}(n) \ge \restritwo$.
\item
$h_{3,3}(n) \ge  \restrithree$.
\end{enumerate}
\end{theorem}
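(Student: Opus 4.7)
The plan is to follow exactly the template used in Section~\ref{se:main} to prove the lower bound on $h_{2,d}(n)$, but with the $3$-uniform machinery now in place. Given $P=\{p_1,\ldots,p_n\}\subseteq\real^d$ with no three points collinear, I define the coloring $COL:\binom{[n]}{3}\to\Rpos$ by $COL(i,j,k)=AREA(p_i,p_j,p_k)$. A rainbow subset for this coloring is precisely a subset $X$ of points such that all areas determined by elements of $\binom{X}{3}$ are distinct, which is exactly what $h_{3,d}(n)$ counts.

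For $d=2$, let $k$ be the largest integer with $n\ge WER_3(6,k)$. By Lemma~\ref{le:hramsey}(6) we have $WER_3(6,k)\le\werthreesix$, so inverting the double exponential $n\ge 2^{2^{k^{186}}}$ gives $k^{186}\le \log\log n$, i.e.\ it suffices to take $k=\restritwo$. The definition of $WER_3$ guarantees that $COL$ admits either a whomog set of size $6$ or a rainbow set of size $k$; by Lemma~\ref{le:nohomogtri}(1), no whomog set of size $6$ exists, so there must be a rainbow set of size $k$, which proves $h_{3,2}(n)\ge\restritwo$.

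For $d=3$ the argument is identical with the parameter $k_1$ raised from $6$ to $13$. By Lemma~\ref{le:hramsey}(7), $WER_3(13,k)\le\werthreeth$, so inverting $n\ge 2^{2^{k^{396}}}$ yields $k=\restrithree$. Lemma~\ref{le:nohomogtri}(2) rules out a whomog set of size $13$, forcing the existence of a rainbow subset of size $k$, and hence $h_{3,3}(n)\ge\restrithree$.

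There is essentially no obstacle at this stage, since all of the heavy lifting has already been carried out: the hypergraph Ramsey upper bounds on $WER_3$ in Lemma~\ref{le:hramsey} and the geometric exclusion results in Lemma~\ref{le:nohomogtri} are invoked as black boxes. The only routine calculation is the inversion of the double-exponential inequality $n\ge 2^{2^{k^c}}$ to obtain $k\ge\Omega((\log\log n)^{1/c})$, with $c=186$ and $c=396$ respectively; the asymmetric choice of the ``whomog side'' as a fixed constant ($6$ or $13$) is what keeps these exponents finite and makes the template work.
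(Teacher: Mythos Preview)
Your proposal is correct and follows essentially the same approach as the paper: define $COL(i,j,k)=AREA(p_i,p_j,p_k)$, choose the largest $k$ with $n\ge WER_3(6,k)$ (respectively $WER_3(13,k)$), invoke Lemma~\ref{le:hramsey} parts (6) and (7) to invert the double exponential, and use Lemma~\ref{le:nohomogtri} to exclude the whomog alternative, forcing a rainbow set of size $k$. The paper's proof is line-for-line the same argument.
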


\begin{proof}

\noindent
a) Let $P=\{p_1,\ldots,p_n\}$ be $n$ points in $\real^2$.
Let $COL$ be the coloring of $\binom{[n]}{3}$
defined by  $COL(i,j,k)=AREA(p_i,p_j,p_k)$.

Let $k$ be the largest integer such that
$$n\ge WER_3(6,k).$$
By Lemma~\ref{le:hramsey}.6 it will suffice to take $k=\restritwo$.
By the definition of $WER_3(6,k)$
there is either a whomog set of size $6$ or a rainbow
set of size $k$. By Lemma~\ref{le:nohomogtri}.a there cannot be such a whomog set,
hence must be a rainbow set of size $k$.

\medskip

\noindent
b) Let $P=\{p_1,\ldots,p_n\}$ be $n$ points in $\real^3$.
Let $COL$ be the coloring of $\binom{[n]}{3}$
defined by  $COL(i,j,k)=AREA(p_i,p_j,p_k)$.

Let $k$ be the largest integer such that
$$n\ge WER_3(13,k).$$
By Lemma~\ref{le:hramsey}.7 it will suffice to take $k=\restrithree$.
By the definition of $WER_3(13,k)$
there is either a whomog set of size $13$ or a rainbow
set of size $k$. By Lemma~\ref{le:nohomogtri}.b there cannot be such a whomog set,
hence must be a rainbow set of size $k$.
\end{proof}

Shelah~\cite{shelahcan} has shown that $ER_3(k)\le 2^{2^{p(k)}}$ 
for some polynomial $p(k)$.
We suspect that this result can be modified to obtain $ER_3(e,k)\le 2^{k^{f(e)}}$ for
some function $f$. 
If this is the case then there exists constants $c_2,c_3$ such that
$h_{3,2}(n)\ge \Omega((\log n)^{c_2})$
and
$h_{3,3}(n)\ge \Omega((\log n)^{c_3})$.
Furthermore, we believe that better upper bounds can be obtained for $WER_3(e,k)$
which will lead to larger values of $c_2$ and $c_3$.

For $d\ge 4$ we need the modification of Shelah's result and also some geometric
lemmas. We believe both are true, though the geometric lemmas look difficult.
Hence we believe that, for all $d$, there exists $c_d$ such that
$h_{3,d}(n)\ge \Omega((\log n)^{c_d})$.

\section{$h_{a,d}(\alpha)$ for Cardinals $\alpha$}\label{se:inf}

\begin{theorem}\label{th:aleph}~
\begin{enumerate}
\item
For all $d\ge 1$, $h_{2,d}(\aleph_0)=\aleph_0$.
\item
For all $d\ge 2$, $h_{3,2}(\aleph_0)=\aleph_0$.
\item
For all $d\ge 2$, $h_{3,3}(\aleph_0)=\aleph_0$.
\end{enumerate}
\end{theorem}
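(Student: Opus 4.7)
The plan is to apply the infinite \Erdosns--Rado canonical Ramsey theorem to the coloring induced by Euclidean distances (for part~1) or triangle areas (for parts~2 and 3), and then use Lemmas~\ref{le:nohomog} and~\ref{le:nohomogtri} to rule out every non-rainbow canonical type. Recall that this theorem states: for every $a\ge 1$ and every coloring $c\st\binom{\omega}{a}\into\omega$ there exist an infinite $V\subseteq\omega$ and a set $I\subseteq[a]$ such that $V$ is \emph{$I$-canonical}, meaning that for all $x_1<\cdots<x_a$ and $y_1<\cdots<y_a$ in $V$, $c(x_1,\ldots,x_a)=c(y_1,\ldots,y_a)$ iff $x_i=y_i$ for every $i\in I$.

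For part~1, I would fix a countable set $\{p_i\st i\in\nat\}\subseteq\real^d$, color $\binom{\nat}{2}$ by $COL(i,j)=|p_i-p_j|$, and apply \Erdosns--Rado to obtain an infinite $I$-canonical $V$. If $I=\{1,2\}$ then $V$ is rainbow and $\{p_i\st i\in V\}$ is the desired countable subset with all distances distinct. If $I\subsetneq\{1,2\}$, then $V$ is whomog in the sense of Definition~\ref{de:weak}: use the numerical order on $V$ when $I=\{1\}$, the reverse numerical order when $I=\{2\}$, and any order when $I=\es$. Any $(d+3)$-element subset of $V$ is then a whomog set of size $d+3$, contradicting Lemma~\ref{le:nohomog}.

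Parts~2 and~3 follow the same pattern with $a=3$: color $\binom{\nat}{3}$ by $COL(i,j,k)=AREA(p_i,p_j,p_k)$ and apply \Erdosns--Rado to obtain an infinite $I$-canonical $V$ for some $I\subseteq[3]$. If $I=[3]$ then $V$ is rainbow and we are done; otherwise $V$ is $I$-whomog in the sense of Definition~\ref{de:weak3}, since the canonicality condition supplies exactly the required implication, and a $6$-element subset in the planar case (resp.\ a $13$-element subset in $\real^3$) contradicts Lemma~\ref{le:nohomogtri}.1 (resp.\ Lemma~\ref{le:nohomogtri}.2). There is no real obstacle: the proof is a direct combination of the classical canonical Ramsey theorem for $\omega$ with the geometric lemmas already proven, and the only mild subtlety is the ordering issue for $a=2$ with $I=\{2\}$, which Definition~\ref{de:weak} accommodates by permitting any linear ordering of $V$.
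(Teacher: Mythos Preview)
Your proposal is correct and follows essentially the same approach as the paper: apply the infinite canonical Ramsey theorem to the distance/area coloring and use Lemmas~\ref{le:nohomog} and~\ref{le:nohomogtri} to eliminate every non-rainbow canonical type. Your write-up is in fact slightly more careful than the paper's, which simply asserts that the homog, min-homog, and max-homog cases are ruled out by Lemma~\ref{le:nohomog} without spelling out the reordering needed for the max-homog case.
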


\begin{proof}

\noindent
1) Let $P$ be a countable subset of $R^d$. Define $COL:\binom{P}{2}$ via
$COL(x,y)=|x-y|$. By the (standard) infinite canonical Ramsey theorem 
there is either homog,  min-homog, max-homog, or rainbow set of size $\aleph_0$.
By Lemma~\ref{le:nohomog} the set cannot be homog, min-homog or max-homog.
Hence it is rainbow.

\noindent
2) This is similar to the proof of part 1, but using the (standard) 3-ary canonical Ramsey theorem and
Lemma~\ref{le:nohomogtri}.1.

\noindent
3) This is similar to the proof of part 1, but using the (standard) 3-ary canonical Ramsey theorem and
Lemma~\ref{le:nohomogtri}.2.
\end{proof}

\section{Speculation about Higher Dimensions}\label{se:spec}

\begin{definition}
Let $\Gamma_0(m)=m$ and,
for $a\ge 0$, $\Gamma_{a+1}(m)=2^{\Gamma_a(m)}$.
\end{definition}

To get lower bounds on $h_{a,d}(n)$ using our approach you need the following:
\begin{enumerate}
\item
Upper bounds on $ER_a(e,k)$.
\begin{enumerate}
\item
Lefmann and \Rodlns~\cite{BetterCanRamsey} have an upper bound on 
$ER_a(k)$ that involves $\Gamma_a$.
We are quite confident that this can be modified to obtain an upper bound on 
$ER_a(e,k)$ (if $e\ll k$ which is our case)
that involves $\Gamma_{a-1}$.
We are also quite confident that this can be modified to obtain an upper bound on 
$WER_a(e,k)$ 
that still involves $\Gamma_{a-1}$ but is better in terms of constants.
\item
Shelah~\cite{shelahcan} has an upper bound on
$ER_a(k)$ that involves $\Gamma_{a-1}$.
We suspect that this can be modified to obtain an upper bound on 
$ER_a(e,k)$ (if $e\ll k$ which is our case)
that involves $\Gamma_{a-2}$.
We also suspect that this can be modified to obtain an upper bound on 
$WER_a(e,k)$ that still involves $\Gamma_{a-2}$ but is better in terms of constants.
\end{enumerate}
\item
The following geometric lemma:
There exists a function $f(a,d)$ such that the following is true:
Let $p_1,\ldots,p_{n}$ be points in $\real^d$, no $a$ points in the same 
$(a-2)$-dimensional space.
Color $\binom{[n]}{a}$ via
$COL(i_1,\ldots,i_a)=VOLUME(p_{i_1},\ldots,p_{i_a})$.
If $I\subset [a]$ then 
this coloring has no $I$-whomog set of size $f(a,d)$.
We conjecture that this is true.
\end{enumerate}

If our suspicions about $ER_a(e,k)$ and our conjecture about geometry
are correct then the following is true:
For all $a,d$ there is a constant $\ep_{a,d}$ such that

$$(\forall a\ge 3)[h_{a,d}(n) = \Omega((\log^{(a-2)} n)^{\ep_{a,d}})].$$

\section{Open Questions}\label{se:open}

\begin{enumerate}
\item
Improve both the upper and lower bounds for $h_{a,d}$.
A combination of our combinatorial techniques and the geometric techniques
of the papers referenced in the introduction may lead to better lower bounds.
\item
We obtain $h_{2,1}(n)=\resdone$.
The known result, $h_{2,1}(n) = \Theta(n^{1/2})$, has a rather difficult
proof. It would be of interest to obtain an easier proof of either
the known result or a weaker version of it that is stronger
than what we have. An easy probabilistic argument yields $h_{2,1}(n)=\Omega(n^{1/4})$.
\item
Obtain upper bound on $ER_a(e,k)$, and 
geometric lemmas, in order to get nontrivial lower bounds on (1) $h_{3,d}$ for $d\ge 4$, and
(2) $h_{a,d}$ for $d\ge 4$, and $a\ge d$. 
See Section~\ref{se:spec} for more thoughts on this.
\item
Look at a variants of $h_{a,d}(n)$ with different metrics on $\real^d$ or in other metric spaces entirely.
\item
Look at a variant of $h_{a,d}(n)$, which we call $h_{a,d}'(n)$,
where the only condition on the points is that they are not all on the same
$(a-2)$-dimensional space.
Using the $n^{1/d} \times \cdots \times n^{1/d}$ grid it is easy to show
that, for $a,d\ll n$,  $h_{a,d}'(n) \le O(n^{(a-1)/a})$.
\item
We showed 
$h_{3,2}(\aleph_0)=\aleph_0$ and
$h_{3,3}(\aleph_0)=\aleph_0$. 
We conjecture that, for $\aleph_0 \le \alpha \le 2^{\aleph_0}$, 
$h_{a,d}(\alpha)=\alpha$.
This may require a canonical Ramsey theorem where the graph has $\alpha$ vertices
and the coloring function is well behaved.
\end{enumerate}

\section{Acknowledgments}

We would like to thank Tucker Bane, Andrew Lohr, Jared Marx-Kuo, and Jessica Shi for helpful discussions.
We would like to thank David Conlon and Jacob Fox for thoughtful discussions,
many references and observations, encouragement, and advice on this paper.


\end{document}